\theoremstyle{plain} 
\newtheorem{theorem}{Theorem}[section]
\newtheorem{lemma}[theorem]{Lemma} 
\newtheorem{proposition}[theorem]{Proposition}
\newtheorem{obs}[theorem]{Observation} 
\theoremstyle{remark} 
\renewcommand{\epsilon}{\varepsilon}
\newcommand{\cgF}{\mathcal{F}}
\newcommand{\cgR}{\mathcal{R}}
\newcommand{\cgW}{\mathcal{W}}
\newcommand{\reals}{\mathbb{R}} 
\newcommand{\Inc}{\operatorname{Inc}}
\newcommand{\Max}{\operatorname{Max}}
\newcommand{\Min}{\operatorname{Min}} 
\newcommand{\ldim}{\operatorname{ldim}}
\newcommand{\Bdim}{\operatorname{Bdim}} 
\newcommand{\fdim}{\operatorname{fdim}}
\newcommand{\fldim}{\operatorname{fldim}} 
\newcommand{\ple}{\operatorname{ple}} 
\newcommand{\LP}{\operatorname{LP}}
\newcommand{\TB}{\operatorname{TB}}
\newcommand{\bTB}{\operatorname{\bf TB}}
\newcommand{\MD}{\operatorname{MD}}
\newcommand{\FLD}{\operatorname{FLD}}
\newcommand{\MFLD}{\operatorname{MFLD}}
\newcommand{\bal}{\mathrm{bal}}
\newcommand{\bst}{\mathrm{bst}}
\newcommand{\sat}{\mathrm{sat}}
\begin{document}

\title[FRACTIONAL LOCAL DIMENSION]{Fractional Local Dimension}

\author[SMITH]{Heather C. Smith} 
\address{Mathematics \& Computer Science Department\\
  Davidson College\\
  Davidson, North Carolina 28035}
\email{hcsmith@davidson.edu}

\author[TROTTER]{William T. Trotter} 
\address{School of Mathematics\\
  Georgia Institute of Technology\\
  Atlanta, Georgia 30332}
\email{trotter@math.gatech.edu}

\date{October 10, 2020}

\subjclass[2010]{06A07, 05C35, 05D10}

\keywords{Dimension, local dimension, fractional dimension, 
  fractional local dimension}

\begin{abstract} 
The original notion of \textit{dimension} for posets was introduced by
Dushnik and Miller in 1941 and has been studied extensively in the 
literature. In 1992, Brightwell and Scheinerman developed the notion 
of \emph{fractional dimension} as the natural linear programming 
relaxation of the Dushnik-Miller concept.  In 2016, Ueckerdt 
introduced the concept of local dimension, and in just three years, several 
research papers studying this new parameter have been published.
In this paper, we introduce and study fractional local dimension.  
As suggested by the terminology, our parameter is a common generalization 
of fractional dimension and local dimension.

For a pair $(n,d)$ with $2\le d<n$, we consider the poset $P(1,d;n)$ consisting 
of all $1$-element and $d$-element subsets of $\{1,\dots,n\}$ partially 
ordered by inclusion. This poset has fractional dimension $d+1$, but for 
fixed $d\ge2$, its local dimension goes to
infinity with $n$. On the other hand, we show that as $n$ tends to infinity, the
fractional local dimension of $P(1,d;n)$ tends to a value $\FLD(d)$ which 
we will be able to determine \emph{exactly}.  For all $d\ge2$, $\FLD(d)$ is
strictly less than $d+1$, and for large $d$, $\FLD(d)\sim
d/(\log d-\log\log d-o(1))$. As an immediate
corollary, we show that if $P$ is a poset, and $d$ is the maximum degree 
of a vertex in the comparability graph of $P$, then the fractional 
local dimension of $P$, is at most $2+\FLD(d)$.  Our arguments use both 
discrete and continuous methods. 
\end{abstract}


\maketitle

\section{Introduction}\label{sec:intro}

A non-empty family $\cgR$ of linear extensions of
a poset $P$ is called a \textit{realizer} of
$P$ when $x\le y$ in $P$ if and only if $x\le y$ in $L$ for each $L\in\cgR$. In
their seminal paper, Dushnik and Miller~\cite{bib:DusMil} defined
the \textit{dimension} of $P$, denoted $\dim(P)$, as the least positive
integer $d$ for which $P$ has a realizer $\cgR$ with $|\cgR|=d$.

For an integer $n\ge2$, the \textit{standard example} $S_n$ is a height~$2$ poset
with minimal elements $A=\{a_1,a_2,\dots,a_n\}$ and maximal elements
$B=\{b_1,b_2,\dots,b_n\}$ such that $a_i<b_j$ in $S_n$ if and only if $i\neq j$.
As noted in~\cite{bib:DusMil}, $\dim(S_n)=n$ for all $n\ge2$. Dimension is clearly
a monotone parameter, i.e., if $Q$ is a subposet of $P$, then $\dim(Q)\le\dim(P)$.
Accordingly, a poset which contains a large standard example as a subposet has large dimension. On
the other hand, posets which do not contain the standard example $S_2$ are called
\textit{interval orders}, and it is well known that interval orders can have 
arbitrarily large dimension. 

In this paper, we introduce and study a new parameter for posets which we call 
\textit{fractional local dimension}.  To discuss our new parameter,
we need some additional notation and terminology.   A \textit{partial linear 
extension}, abbreviated $\ple$, of a  poset $P$ is a linear 
extension of a subposet of $P$.  We do not require the subposet to be 
proper, so a linear extension is also a $\ple$.  We let $\ple(P)$ denote 
the family of all $\ple$'s of $P$.

When $P$ is a poset, a function $w$ assigning to each $M\in\ple(P)$ 
a real-valued weight $w(M)$, with $0\le w(M)\le 1$, will be 
called a \textit{local weight function for $P$}. For each $u\in P$, we 
then define the \textit{local measure of $u$ for $w$}, denoted $\mu(u,w)$ 
by setting:
\[ 
  \mu(u,w)=\sum\{w(M):M\in\ple(P), u\in M\}. 
\] 
In turn, we define the
\textit{local measure of $P$ for $w$}, denoted $\mu(P,w)$ by setting:
 
\[ 
  \mu(P,w)=\max\{\mu(u,w):u\in P\}. 
\]

A local weight function $w$ for $P$ will be called a \textit{fractional local
realizer} of $P$ provided for each ordered pair $(u,v)$ of elements
(not necessarily distinct) of $P$, if $u\le v$ in $P$, we have
\[ 
  \sum\{w(M): M\in\ple(P), u\le v\text{ in }M\}\ge 1,
\] 
 and if $u$ is incomparable to $v$ in $P$, we have
\[ 
  \sum\{w(M): M\in\ple(P), u> v\text{ in }M\}\ge 1. 
\] 
The \textit{fractional local dimension of $P$}, denoted $\fldim(P)$, is then defined
by:
\[ 
  \fldim(P)=\min\{\mu(P,w):w\text{ is a fractional local realizer of $P$}\}. 
\]

Since $\fldim(P)$ is the solution to a linear programming problem posed with
coordinates which are integers, there is an optimal solution in which all
weights are rational numbers.

Readers may note that our parameter becomes \textit{fractional dimension}
if we only allow positive weights on linear extensions.  This concept was
introduced in 1992 by Brightwell and Scheinerman~\cite{bib:BriSch} and has
been studied by several groups of researchers in the intervening years
(see~\cite{bib:FelTro}, \cite{bib:TroWin} and~\cite{bib:BiHaPo}, for
example).

On the other hand, if we only allow integer values of $0$ or $1$ for weights,
then we obtain \textit{local dimension}. This is a relatively new concept
introduced by Ueckerdt ~\cite{bib:Ueck} in 2016, but in the past two years, this
concept is already the subject of several research papers (see~\cite{bib:BPSTT}, 
\cite{bib:TroWal} and~\cite{bib:BoGrTr}, for example).  This research
on local dimension has also sparked renewed interest in a related 
parameter, known as \textit{Boolean dimension} introduced in 1989 by  
Ne\v{s}et\v{r}il and Pudl\'ak~\cite{bib:NesPud} (see also~\cite{bib:GaNeTa}, 
\cite{bib:FeMeMi}, \cite{bib:MeMiTr}, \cite{bib:BPSTT} and~\cite{bib:TroWal}, 
for example).

In the discussion to follow, we will denote the fractional
dimension, local dimension and Boolean dimension of a poset $P$
by $\fdim(P)$, $\ldim(P)$ and $\Bdim(P)$, respectively.

We list below some basic properties of fractional local
dimension.   The first three are immediate consequences of the
definition.  The remaining items in the list are non-trivial.
However, in each case, a proof can be obtained via 
a relatively straightforward modification of an argument, or arguments, 
given elsewhere in the literature.  Accordingly, for each of these
statements, we give citations to the relevant papers.
Our focus in this paper is on results which do not have
analogues in the literature and seem to require proof 
techniques which are completely novel in the combinatorics
of posets.

\begin{enumerate}
\item For any poset $P$, $\fldim(P) \leq \fdim(P)$ and $\fldim(P)\leq \ldim(P)$.
\item If $P$ is a poset and $Q$ is a subposet of $P$, then
$\fldim(Q)\le\fldim(P)$.
\item If  $P$ is a poset and $P^*$ is the dual of $P$, then
$\fldim(P^*)=\fldim(P)$.
\item If $P$ is a poset, then the following statements are
equivalent:\\
(a)~$\dim(P)=2$; (b)~$\fdim(P)=2$; (c)~$\ldim(P)=2$; 
(d)~$\fldim(P)=2$ and (e)~$\Bdim(P)=2$.  The proof for fractional local dimension is
an easy extension of ideas from the proof for fractional dimension
given in~\cite{bib:BriSch} and the proof for local dimension is
given in~\cite{bib:BPSTT}.  
\item If $P$ is a poset with at least two points, then for every $x\in P$,
$\fldim(P)\le 1+\fldim(P-\{x\})$.  Again, the proof is a blending
of ideas from the proof of the analogous result for fractional dimension
(see either~\cite{bib:BriSch} or~\cite{bib:BiHaPo}) and the proof for
local dimension given in~\cite{bib:BPSTT}.  The question as to whether
the analogous result holds for Boolean dimension is still open.
\item For all $n\ge3$, $\ldim(S_n)=3$, as noted by Ueckerdt~\cite{bib:Ueck}. 
This observation was part of his motivation in developing the
concept of local dimension.  On the other hand, as noted
in~\cite{bib:BriSch}, $\fdim(S_n) = n$ for all $n\ge2$. It is a nice 
exercise to show that $\fldim(S_n) < 3$ for all $n\ge3$.  
\item If $P$ is an interval order, then it is shown in~\cite{bib:BriSch}
that $\fdim(P) < 4$.  In~\cite{bib:TroWin}, it is shown that this
inequality is asymptotically tight.  Both statements also hold
for fractional local dimension, and the argument for the tightness
of the inequality is a technical, but relatively 
straightforward, extension of the
proof in~\cite{bib:TroWin}.  On the other hand, it is shown
in~\cite{bib:BPSTT} that both local dimension and Boolean dimension
are unbounded on the class of interval orders.
\item The following conjecture, known as the \emph{Removable Pair Conjecture},
has been open for nearly 70 years: If $P$ is a poset with at least 
three points, then there is a distinct pair $\{x,y\}$ of points of $P$ such that 
$\dim(P)\le 1+\dim(P-\{x,y\})$.  In~\cite{bib:BiHaPo},
Bir\'{o}, Hamburger and P\'{o}r gave
a very clever proof of the analogous result for fractional dimension, and
it is straightforward to adapt their argument to obtain the parallel
result for fractional local dimension.  However,
the Removable Pair Conjecture is open for both
local dimension and Boolean dimension.
\end{enumerate}
 
\section{Forcing Large Fractional Local Dimension}\label{sec:force}

We have already noted that the fractional local dimension of a standard
example is less than~$3$ and the fractional local dimension of
an interval order is less than~$4$.  However,  there does not appear to
be an elementary argument to answer whether or not fractional local
dimension is bounded on the class of \emph{all} posets.
Accordingly, our primary goal for the remainder of this
paper is to analyze the fractional local dimension of posets in a well-studied
family. As a by-product of this work, we will learn that there are indeed posets
which have arbitrarily large fractional local dimension.

For a pair $(d,n)$ of integers 
with $2\le d < n$, let $P(1,d;n)$ denote
the height~$2$ poset consisting of all $1$-element and $d$-element subsets 
of $\{1,\dots,n\}$ ordered by inclusion. It is customary to ignore 
braces in discussing the minimal elements of $P(1,d;n)$ so that the 
minimal elements are just the integers in $\{1,\dots,n\}$.
Accordingly, $u<S$ in $P(1,d;n)$ when $u\in S$. We will abbreviate 
$\dim(P(1,d;n))$ as $\dim(1,d;n)$, with analogous abbreviations for 
other parameters.

Here are some of the highlights of results for posets in this family:

\begin{enumerate}
\item Dushnik~\cite{bib:Dush} calculated $\dim(1,d;n)$ exactly when 
$d\ge 2\sqrt{n}$.
\item  Combining the upper bound of Spencer~\cite{bib:Spen} with the 
lower bound of Kierstead~\cite{bib:Kier}, we have for fixed $d$:
\[ \dim(1,d;n) = \Omega\left(2^d\log\log n\right)\qquad \text{and}\qquad
 \dim(1,d;n)=O(d2^d\log\log n). \]

\item  For $d=2$, the value of $\dim(1,2;n)$ can be computed \textit{exactly}
for almost all values of $n$ (see the comments in~\cite{bib:BHPT}).
\item  It is shown in~\cite{bib:BPSTT} that for each fixed $d\ge2$, 
$\ldim(1,d;n)\rightarrow\infty$ with $n$.
\item  In~\cite{bib:BriSch}, it was noted that for each pair $(d,n)$
with $n>d\ge2$, $\fdim(1,d;n)=d+1$.
\end{enumerate}

For fixed $d\ge2$, $\fldim(1,d;n)$ is non-decreasing in $n$ and bounded from above
by $d+1$, so the following limit exists: 
\[
\FLD(d)=\lim_{n\rightarrow\infty}\fldim(1,d;n).
\]
Although we know $2<\FLD(d)\le d+1$ for all $d\ge2$, 
it is not immediately clear that $\FLD(d)<d+1$.
While this fact will emerge from our results, we know of no
simple proof.  Also, it is not clear whether $\FLD(d)$ grows
with $d$. Again, we will show that it does, but there does not appear to
be an elementary proof of this fact either.

For the remainder of this section, fix an arbitrary integer $d\geq 2$. 
To introduce our main results, we require some additional notation and
terminology.

Define a real-valued function $f(x)$ on the closed interval
$[0,1]$ by setting  $f(x)=(1-x)^d$.  Then there is a unique real number
$\beta$ with $0<\beta<1$ so that $f(\beta)=\beta$.  For example, when $d=2$,
\[
\beta = (3-\sqrt{5})/2\sim0.381966.
\]

We then consider real-valued functions $r(x)$ and $g(x)$, both
having the closed interval $[\beta,1]$ as their domain, defined by:
\begin{align}\label{eqn:r-and-g}
r(x)&=\frac{1}{d+1}\Bigl[(d+1)x - dx^{\frac{d+1}{d}} 
  - (1-x)^{d+1}\Bigr]\text{\quad and}\\
g(x)&=\frac{(d+1)r(x) - x}{(d+1)r(x) - x^2}.
\end{align}

Let $x_{\bst}$ and $x_{\bal}$ be the (uniquely determined)
numbers in $[\beta,1]$ such that (1)~the value of $g(x)$ is
maximized when $x=x_{\bst}$, and (2)~$r(x)=x^2/2$ when $x=x_{\bal}$. 
Simple calculations show for all $d\ge2$, $\beta< x_{\bst} < x_{\bal}<1$.
In turn, we define quantities $c_{\bal}$ and $c_{\bst}$ by:
\begin{align*}
c_{\bal} &= \frac{x}{r(x)}\text{\quad evaluated when $x=x_{\bal}$ and}\\
c_{\bst} &= 2 +\frac{(d-1)(x- x^2)}{(d+1)r(x) - x^2}\text{\quad evaluated when
$x=x_{\bst}$}.
\end{align*}

With these definitions in hand, we can now state our main theorem.

\begin{theorem}\label{thm:main}
For every $d\ge2$, $\FLD(d)=\min\{c_{\bal},c_{\bst}\}$.
\end{theorem}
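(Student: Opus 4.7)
The plan is to prove both directions of the equality through LP duality, exploiting the $S_n$-symmetry of $P(1,d;n)$. First, I observe that every permutation $\sigma \in S_n$ of $[n]$ induces an automorphism of $P(1,d;n)$, so by averaging any fractional local realizer $w$ over the $S_n$-action we obtain a symmetric realizer $\bar{w}$ with the same or smaller local measure. Thus we may restrict to symmetric fractional local realizers, whose weights depend only on the isomorphism type of the $\ple$. As $n\to\infty$, the natural parameterization of these types involves a density $x\in[\beta,1]$, and the asymptotics of binomial ratios $\binom{\lfloor xn\rfloor}{d}/\binom{n}{d}\to x^d$ give rise to the functions $f(x)=(1-x)^d$, $r(x)$, and $g(x)$ in the statement.

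For the upper bound $\FLD(d)\le\min\{c_{\bal},c_{\bst}\}$, I would construct two parametric families of symmetric fractional local realizers. Each atomic $\ple$ is built by fixing a set $A\subseteq[n]$ and a $d$-set $S$ satisfying a prescribed intersection condition with $A$, then placing $S$ below every element of $A$ in the linear order on $A\cup\{S\}$; this reverses every critical pair $(a,S)$ with $a\in A\setminus S$. Integrating over $A$ (equivalently, summing over discrete ``layers'' of increasing density $x$) with appropriate weights produces, in the $n\to\infty$ limit, a realizer whose measure at singletons and at $d$-sets can be computed as simple integrals involving $r(x)$. The first family balances these two measures, which corresponds exactly to the equation $r(x)=x^2/2$ defining $x_{\bal}$, giving $\mu(P,w)\to x_{\bal}/r(x_{\bal})=c_{\bal}$. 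The second family introduces an auxiliary collection of $\ple$'s devoted to singletons, reducing the singleton measure selectively; optimizing the cutoff yields $c_{\bst}$ at the maximizer $x_{\bst}$ of $g(x)$. Taking the better of the two constructions and passing to the limit gives the upper bound.

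For the matching lower bound $\FLD(d)\ge\min\{c_{\bal},c_{\bst}\}$, I would derive and solve the dual LP. By symmetry, the dual reduces to a continuous optimization problem: assign a nonnegative density $\rho(x)\,dx$ to reversal constraints ``at level $x$'' and optimize a linear functional subject to two affine constraints, one from the singleton element bounds and one from the $d$-set element bounds. A careful case analysis of the KKT conditions shows that the dual optimum is attained either at the balanced point $x_{\bal}$, where both element constraints are tight simultaneously, or at the point $x_{\bst}$, where only one constraint is tight; in each case the value equals the corresponding quantity $c_{\bal}$ or $c_{\bst}$, and the smaller of the two is the actual LP optimum.

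The main obstacle will be the rigorous analysis of the lower bound, and specifically the passage from the discrete LP (with $\Theta(n^{d+1})$ constraint variables) to its continuous limit. Verifying that the symmetric optimum for finite $n$ converges to the claimed value requires tracking error terms in the binomial approximations and confirming that the two extremal points $x_{\bal}$ and $x_{\bst}$ genuinely exhaust the optimal regimes. The concavity-type properties of $r(x)$ and $g(x)$ on $[\beta,1]$---which are needed to forbid ``hybrid'' dual strategies from beating both corners---will have to be established by explicit calculus, and I expect this verification to be the most technically demanding step of the argument.
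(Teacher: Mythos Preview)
Your overall architecture---symmetry reduction, passage to a continuous limit, and matching upper and lower bounds---is the same as the paper's. But the specific mechanisms diverge in both halves, and in the upper bound there is a real gap.

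For the lower bound, the paper does \emph{not} use LP duality. It works entirely in the primal: it relaxes the per-pair constraints to four aggregate constraints $A,B,R,Q$, then applies a sequence of tie-breaking rules to force any optimal weight function into a rigid form. Weight is first restricted to maximal-preserving or maximal-reversing $\ple$'s, then to \emph{block-regular} ones, then (via Lemma~\ref{lem:balanced}) to \emph{balanced} ones on the diagonal $x=y$, and finally to at most two of the three specific $\ple$'s $M_{\bal}$, $M_{\bst}$, $M_{\sat}$. Your dual approach could in principle reach the same value, but your sketch does not match the actual dual structure: dual variables sit on elements and on pairs, while dual \emph{constraints} are indexed by $\ple$ shapes, so after symmetrization you get one inequality \emph{per $\ple$ isomorphism type}, not ``two affine constraints''. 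Collapsing that infinite family to a one-parameter problem on $[\beta,1]$ still requires the paper's structural fact that only balanced block-regular $\ple$'s can bind---i.e.\ the content of Lemmas~\ref{lem:r+1} through~\ref{lem:balanced}---which your outline does not supply.

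The genuine gap is your upper-bound construction. Your ``atomic $\ple$'' is $A\cup\{S\}$ with a \emph{single} $d$-set $S$ placed below $A$; this has $b=1$ and $q=0$, and is maximally unbalanced in the paper's sense. The function $r(x)$ in the theorem statement is not a generic density but the reversal count of the specific \emph{balanced block-regular} $\ple$ $M(x)$, which contains $\sim xn$ minimal elements and $\sim x\binom{n}{d}$ maximal elements arranged in the interleaved block structure of Lemma~\ref{lem:r+1}. There is no reason your single-$S$ atoms should produce local measures governed by this $r(x)$, so invoking $r(x)=x^2/2$ or the maximizer of $g(x)$ is unjustified. The paper's upper bound instead spreads weight uniformly over all $\ple$'s isomorphic to $M_{\bal}$ (option one) or to $M_{\bst}$ together with the maximal-reversing linear extension $M_{\sat}=M(1)$ (option two), and then verifies the min--min, max--max, comparable, and incomparable pair constraints one by one; each verification uses the exact block structure of these large $\ple$'s. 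To repair your argument you would have to replace the single-$S$ atoms by the block-regular $\ple$'s $M(x)$, at which point you recover the paper's construction.
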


Readers may note that our proof for Theorem~\ref{thm:main}
shows that $\FLD(d)$ is the minimum of the two quantities
$c_{\bst}$ and $c_{\bal}$ but does not tell us which
of the two is the correct answer.  However, with some
supplementary analysis, it is straightforward to determine
which of the two values is actually the minimum:

\begin{theorem}\label{thm:determination}
When $2\le d\le3$, $\FLD(d) =c_{\bal}$, and when $d\ge4$, $\FLD(d) = c_{\bst}$.
\end{theorem}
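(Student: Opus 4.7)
By Theorem~\ref{thm:main} we have $\FLD(d) = \min\{c_{\bal}, c_{\bst}\}$, so the task reduces to determining, for each $d \ge 2$, which of the two quantities attains the minimum. My plan is first to recast both in a common functional form and then resolve the comparison by a case split on $d$ small ($d = 2, 3$) versus $d$ large ($d \ge 4$).

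The common form comes from the identity $(x - x^2)/((d+1)r(x) - x^2) = 1 - g(x)$, which immediately yields $c_{\bst} = 2 + (d-1)(1 - g(x_{\bst}))$. Substituting $r(x_{\bal}) = x_{\bal}^2/2$ into the definition of $g$ gives $g(x_{\bal}) = ((d+1)x_{\bal} - 2)/((d-1)x_{\bal})$, and a short calculation then gives $c_{\bal} = 2/x_{\bal} = 2 + (d-1)(1 - g(x_{\bal}))$. Hence the comparison of $c_{\bal}$ and $c_{\bst}$ is equivalent to the comparison of $g(x_{\bal})$ and $g(x_{\bst})$, which in turn is controlled by the position of $x_{\bal}$ relative to the extremizer $x_{\bst}$ of $g$ on $[\beta,1]$.

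For $d = 2$ and $d = 3$, I would solve the defining equations directly --- both $r(x_{\bal}) = x_{\bal}^2/2$ and the stationarity condition $g'(x_{\bst}) = 0$. Since $r$ and $g$ are elementary (polynomial apart from the single radical factor $x^{1/d}$), each of $x_{\bal}$ and $x_{\bst}$ is characterized by a low-degree polynomial-radical equation that can be solved with rigorous numerical bounds. Substituting into the two expressions for $c_{\bal}$ and $c_{\bst}$ verifies $c_{\bal} \le c_{\bst}$, giving $\FLD(d) = c_{\bal}$ in these cases.

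For $d \ge 4$, I would argue analytically. Using the formula $r'(x) = 1 - x^{1/d} + (1-x)^d$ and the simplification $r(x_{\bal}) = x_{\bal}^2/2$, one can evaluate $g'(x_{\bal})$ in closed form and obtain an expression whose sign is tractable as a function of $d$. Showing that this sign determines the opposite direction for $d \ge 4$ establishes $g(x_{\bal}) < g(x_{\bst})$ and hence $c_{\bst} < c_{\bal}$. The main obstacle is precisely at the crossover $d = 4$: since $c_{\bal}$ and $c_{\bst}$ are numerically very close there, a sharp sign analysis rather than an order-of-magnitude estimate is required to confirm the strict inequality. For $d \ge 5$ a monotonicity argument in $d$ extends the conclusion, with the asymptotic $\FLD(d) \sim d/(\log d - \log\log d - o(1))$ from the abstract serving as a consistency check, as this matches the leading-order expansion of $c_{\bst}$.
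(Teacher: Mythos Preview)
Your unifying identities are correct: $c_{\bst}=2+(d-1)(1-g(x_{\bst}))$ and $c_{\bal}=2+(d-1)(1-g(x_{\bal}))$. But you should have stopped and stared at them, because together with the paper's definition of $x_{\bst}$ as \emph{the point where $g$ is maximized on $[\beta,1]$} they force $g(x_{\bst})\ge g(x_{\bal})$, hence $c_{\bst}\le c_{\bal}$, for \emph{every} $d\ge2$. That already contradicts the statement you are trying to prove for $d\in\{2,3\}$, and it contradicts the paper's own numerical table. Your plan is therefore internally inconsistent: the ``$d\ge4$'' step (show $g(x_{\bal})<g(x_{\bst})$) is automatic from the definition, while the ``$d=2,3$'' step (verify $c_{\bal}\le c_{\bst}$, i.e.\ $g(x_{\bal})\ge g(x_{\bst})$) is impossible under that same definition. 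A direct computation for $d=2$ confirms this: $g$ is strictly increasing through $x_{\bal}\approx0.800$ and peaks near $x\approx0.88$, so the tabulated $x_{\bst}\approx0.791$ cannot be the global maximizer of $g$ on $[\beta,1]$.

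What you have actually detected is an imprecision in the paper rather than a fact to be verified. The paper gives no proof of Theorem~\ref{thm:determination} beyond the phrase ``straightforward supplementary analysis'' and the numerical table, so there is nothing to compare your argument against line by line; but any correct argument must first pin down what $x_{\bst}$ really is for small $d$ (the feasibility constraint $w(M_{\sat})\ge0$ in the two-$\ple$ solution forces $x\le x_{\bal}$, so the operative optimization of $g$ is over a restricted interval, and the dichotomy in the theorem is then governed by whether the maximizer is interior or hits the boundary $x_{\bal}$). Your $g'(x_{\bal})$ idea is the right diagnostic for that reformulated question---its sign tells you on which side of $x_{\bal}$ the critical point of $g$ lies---but you need to aim it at the constrained problem, not at the stated definition.
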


For a representative sampling of choices for $d$, the following
table gives the values of $x_{\bst}$, $x_{\bal}$, $c_{\bal}$ and
$c_{\bst}$, with all values truncated to six digits after the decimal 
point.

\begin{equation*}\label{tab:parameters}
\begin{array}{rrrrr}
d       & x_{\bst} &x_{\bal} &c_{\bal} & c_{\bst}\\
2       &0.790715 &0.800466 &2.498543     &2.499190\\
3       &0.667279 &0.675604 &2.960314     &2.960495\\
4       &0.560972 &0.589913 &3.390330     &3.389027\\
5       &0.478918 &0.526764 &3.796767     &3.790859\\
10      &0.289481 &0.356142 &5.615731     &5.560625\\
100     &0.048626 &0.071263 &28.065087    &26.812765\\
1000    &0.007055 &0.010988 &182.020489   &172.548703\\
10000   &0.000932 &0.001500 &1333.454911  &1269.860071\\
100000  &0.000116 &0.000191 &10458.564830 &10018.329933\\
1000000 &0.000014 &0.000023 &85721.802797 &82526.406188\\
\end{array}
\end{equation*}

Many extremal problems in combinatorics are relatively
straightforward if there is a single extremal example.  In our problem,
there are several natural candidates, and as suggested
by the statement of our main theorem, the optimum solution
is always to be found among two of these examples.

The following elementary proposition, proved
with standard techniques of calculus, shows the asymptotic behavior of $\FLD(d)$.

\begin{proposition}\label{pro:beta_d}
If $d\ge 4$, then $
  \FLD(d) = \frac{d}{\log d-\log\log d-o(1)}.$
\end{proposition}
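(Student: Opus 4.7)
By Theorem~\ref{thm:determination}, for $d\ge 4$ we have $\FLD(d)=c_{\bst}$, so the plan is to reduce the claim to an asymptotic estimate for $c_{\bst}$. Introducing $h(x) := (x - x^2)/((d+1)r(x) - x^2)$, I observe that $g(x)=1-h(x)$, so $x_{\bst}$ is equivalently the minimizer of $h$ on $[\beta,1]$, and $c_{\bst}=2+(d-1)\,h(x_{\bst})$. The task therefore becomes computing the asymptotic behavior of $h(x_{\bst})$ as $d\to\infty$.

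The tabulated data strongly suggest that $x_{\bst}$ lies in the regime $x\sim(\log d)/d$. Setting $u:=(d+1)x$ and using the Taylor expansions
\[
 dx^{(d+1)/d} = dx + x\log x + O\bigl(x(\log x)^2/d\bigr),\qquad (1-x)^{d+1} = e^{-(d+1)x}\bigl(1+O(u^2/d)\bigr),
\]
I would obtain $F(x):=(d+1)r(x)/x = 1-\log x - e^{-u}/x + o(1)$ throughout this range. To leading order, minimizing $h$ is equivalent to maximizing $F$, and a short calculation reduces the critical equation $F'(x)=0$ to the transcendental equation $u\,e^u=(d+1)(u+1)$. Taking logarithms and iterating gives $u=\log(d+1)+\log(1+1/u)$, so $u=\log d+o(1)$ and hence $\log u=\log\log d+o(1)$.

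Substituting back, $-\log x_{\bst}=\log(d+1)-\log u=\log d-\log\log d+o(1)$ and the critical equation forces $e^{-u}/x_{\bst}=1/(u+1)=o(1)$, yielding $F(x_{\bst})=\log d-\log\log d+O(1)$. Therefore $h(x_{\bst})=(1+o(1))/(\log d-\log\log d)$, and a standard manipulation collecting the lower-order terms (the additive $2$ in the definition of $c_{\bst}$, together with the $O(1)$ shift inside the denominator, perturb $d/c_{\bst}$ only by $O((\log d)^2/d)=o(1)$) produces
\[
 \FLD(d) = c_{\bst} = 2 + \frac{d-1}{\log d-\log\log d+O(1)} = \frac{d}{\log d-\log\log d-o(1)},
\]
which is the stated asymptotic formula.

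The main obstacle is keeping the various Taylor error terms precise enough to collapse into a single $o(1)$ correction in the final denominator, and rigorously verifying that the minimum of $h$ actually lies in the $(\log d)/d$-regime rather than elsewhere in $[\beta,1]$. The second point can be handled by a routine comparison: $h$ is bounded below by a positive constant on any compact subset of $(0,1]$, whereas the candidate value $h(x_{\bst})$ is only $O(1/\log d)$; and at the left endpoint, a direct computation using the defining relation $(1-\beta)^d=\beta$ gives $F(\beta)=\log d-\log\log d+o(1)<F(x_{\bst})$, confirming that the minimum of $h$ is indeed attained at the interior critical point and not at the boundary.
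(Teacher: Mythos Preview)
Your approach is exactly what the paper has in mind: it offers no proof, stating only that the result follows from ``standard techniques of calculus,'' and your computation is precisely such an argument. The identification $c_{\bst}=2+(d-1)h(x_{\bst})$, the asymptotic expansion of $F(x)=(d+1)r(x)/x$, the derivation of the critical equation $ue^u=(d+1)(u+1)$, and its solution $u=\log d+o(1)$ are all correct and constitute the intended proof.

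There is one slip in your last paragraph. You claim that the $O(1)$ shift inside the denominator of $h(x_{\bst})$ perturbs $d/c_{\bst}$ only by $O((\log d)^2/d)$. That is true of the additive~$2$ in $c_{\bst}=2+(d-1)h(x_{\bst})$, but not of an $O(1)$ term inside $1/h(x_{\bst})$ itself: since $d/c_{\bst}=1/h(x_{\bst})+o(1)$, an additive constant in $1/h(x_{\bst})$ survives as an additive constant in $d/c_{\bst}$. Tracking your own expansions carefully gives $F(x_{\bst})=1-\log x_{\bst}-o(1)=\log d-\log\log d+1+o(1)$, hence $d/c_{\bst}=\log d-\log\log d+1+o(1)$; the paper's own table corroborates this (for $d=10^6$ one has $d/c_{\bst}\approx 12.12$ while $\log d-\log\log d\approx 11.19$). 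So your argument actually yields $c_{\bst}=d/(\log d-\log\log d+1+o(1))$, and the discrepancy with the stated form $d/(\log d-\log\log d-o(1))$ reflects an imprecision in the paper's formulation rather than a flaw in your method.
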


Arguments for the supporting lemmas for our main theorems require us to prove
formally some highly intuitive statements.  Some of these statements 
involve properties of posets and $\ple$'s while others involve properties 
of real-valued functions, their derivatives, and intervals where their concavity is upwards and downwards. In cases where
elementary combinatorial arguments, algebraic manipulations and techniques from 
undergraduate analysis suffice, we will sometimes give only
the statement of lemmas and propositions.  In modestly more complex cases,
we will simply sketch the details of the proof.

The remainder of the paper is organized as follows.
In Section~\ref{sec:lower-bound}, we give the proof of the 
lower bound on $\FLD(d)$, i.e., we show that $\FLD(d)$ is at least the minimum 
of the quantities $c_{\bst}$ and $c_{\bal}$.  We consider this
lower bound our main result.   In Section~\ref{sec:upper-bound},
we give a construction showing that the lower bound is tight, a construction which capitalizes on 
the details of our proof for the lower bound. In particular, since
the lower bound is the minimum of two quantities, we develop
constructions which show that each of them is an upper bound on
$\FLD(d)$.

In Section~\ref{sec:maximum-degree}, we explain how our main theorem
implies that the fractional local dimension of  a poset $P$ is
at most $2+\FLD(d)$ when $P$ is a poset in which $d$ is the maximum degree
of its comparability graph.  Finally, in Section~\ref{sec:close}, we 
close with some brief comments on open problems for fractional local 
dimension. 

\section{Proof of the Lower Bound}\label{sec:lower-bound}

Fix an integer $d\ge2$.  In the first part of the proof,
we also fix an integer $n$ which is \emph{extremely} large
in comparison to $d$.  With the values of $d$ and $n$ fixed, we 
abbreviate $P(1,d;n)$ to $P$.

Throughout the proof, we will denote members of $\ple(P)$ with the
symbols $M$ and $L$, sometimes with subscripts or primes.  When
we use $M$, the $\ple$ may or may not be a linear extension, but
when we use $L$, the $\ple$ is \emph{required} to be a linear extension.

To provide readers with an overview\footnote{As is to be expected, this overview is
a slight distortion of the truth, but the errors disappear as
$n\rightarrow\infty$.} of where the argument is headed,
we will identify three $\ple$'s which will be called
$M_\bal$, $M_\bst$ and $M_\sat$.  Then we find that an optimal fractional
local realizer will assign positive weight \emph{only} to
$\ple$'s which are isomorphic to one of these three.  In fact,
either (1)~positive weight will be placed only on copies of $M_\bal$; or
(2)~positive weight will be placed only on copies of 
$M_\bst$ and $M_\sat$.

\subsection{The Relaxed Linear Programming Problem $\LP$}

Recall that finding the value of $\fldim(1,d;n)$ amounts to
solving a linear programming problem.
Taking advantage of the natural symmetry of the poset $P$, we formulate
a relaxed version of this problem using the following notation:

For each $M\in\ple(P)$,
\begin{enumerate}
\item $a(M)$ counts the number of elements of $\Min(P)$ which are
in $M$.
\item $b(M)$ counts the number of elements of $\Max(P)$ which are
in $M$.
\item $r(M)$ counts the number of pairs $(u,S)\in\Min(P)\times
\Max(P)$ with $u>S$ in $M$.
\item $q(M)$ counts the number of pairs $(u,S)\in\Min(P)\times\Max(P)$
with $u<S$ in $M$.
\end{enumerate}
We note that the count $q(M)$ is for all pairs
$(u,S)$ with $u<S$ in $M$, regardless of whether or not $u\in S$.
Therefore, we have the following basic identity which holds for
an arbitrary $\ple$ $M$: $r(M)+q(M) = a(M)\cdot b(M)$.

Here is the reduced problem which we will refer to as $\LP$:

\smallskip
\noindent
\textbf{LP}.\quad \textit{Minimize} the quantity $c$ for which there
is a local weight function $w:\ple(P)\rightarrow[0,1]$
satisfying the following constraints:
\begin{align*}\label{eqn:constraints-LP} 
  A(w)&=\sum_{M\in\ple(P)}a(M)\cdot w(M)\le c n.  \\
  B(w)&=\sum_{M\in\ple(P)}b(M)\cdot w(M)\le c \binom{n}{d}.
  \\
  R(w)&=\sum_{M\in\ple(P)}r(M)\cdot w(M)\ge n\binom{n-1}{d}.\\
  Q(w)&=\sum_{M\in\ple(P)}q(M)\cdot w(M)\ge n\binom{n}{d}. 
\end{align*} 

We refer to these four inequalities as
constraints~$A$, $B$, $R$ and~$Q$, respectively\footnote{Writing
the constraints in this in-line form serves to facilitate several arguments for
properties of solutions.  However, later in the proof,
we will consider them in the ``scaled'' form obtained by dividing both
sides of each constraint by the terms involving $n$ and/or $d$.}. The following observation relates the optimal solution for $\LP$ and the value of $\fldim(1,d;n)$.

\begin{obs}
If $w$ is a fractional local realizer of $P$ and
$c=\mu(P,w)$, then the pair $(c,w)$ is a solution for $\LP$. Therefore if $c_0$ is the minimum cost for $\LP$, then 
$\fldim(1,d;n)\ge c_0$.
\end{obs}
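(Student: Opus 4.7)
The plan is to verify each of the four LP constraints directly from the definition of a fractional local realizer, and then observe that the second claim is an immediate consequence of feasibility. Throughout, I will swap the order of summation so that the inner sum is of the form that appears in the realizer conditions.

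For constraints $A$ and $B$, I would write
\[
A(w)=\sum_{M\in\ple(P)}a(M)\,w(M)=\sum_{u\in\Min(P)}\sum_{M\ni u}w(M)=\sum_{u\in\Min(P)}\mu(u,w),
\]
and analogously for $B(w)$. Since $\mu(u,w)\le\mu(P,w)=c$ for every $u$, and $|\Min(P)|=n$, $|\Max(P)|=\binom{n}{d}$, the bounds $A(w)\le cn$ and $B(w)\le c\binom{n}{d}$ follow immediately.

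For constraint $R$, I would swap summation to get
\[
R(w)=\sum_{(u,S)\in\Min(P)\times\Max(P)}\,\sum_{\substack{M\in\ple(P)\\ u>S\text{ in }M}}w(M).
\]
Since every $\ple$ extends the poset order, no $\ple$ can place $u>S$ when $u\in S$, so only pairs with $u\notin S$ contribute. These are exactly the incomparable pairs in $P$, and there are $n\binom{n}{d}-n\binom{n-1}{d-1}=n\binom{n-1}{d}$ of them. For each such incomparable pair, the fractional local realizer condition applied to the ordered pair $(u,S)$ gives $\sum_{M:\,u>S\text{ in }M}w(M)\ge 1$, yielding $R(w)\ge n\binom{n-1}{d}$. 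For constraint $Q$, the same swap produces an inner sum $\sum_{M:\,u<S\text{ in }M}w(M)$; the key observation (and the only mildly subtle step) is that for an incomparable pair $(u,S)$, this lower bound of $1$ comes from applying the realizer condition to the \emph{reversed} ordered pair $(S,u)$, which is also incomparable, giving $\sum_{M:\,S>u\text{ in }M}w(M)\ge 1$. For pairs with $u\in S$, one has $u\le S$ in $P$, and since $u\neq S$ the realizer condition $\sum_{M:\,u\le S\text{ in }M}w(M)\ge 1$ coincides with $\sum_{M:\,u<S\text{ in }M}w(M)\ge 1$. Summing over all $n\binom{n-1}{d}$ incomparable pairs and all $n\binom{n-1}{d-1}$ comparable pairs gives $Q(w)\ge n\binom{n-1}{d}+n\binom{n-1}{d-1}=n\binom{n}{d}$.

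Having shown $(c,w)$ is feasible for $\LP$ whenever $w$ is a fractional local realizer and $c=\mu(P,w)$, the second statement follows: the infimum of $\mu(P,w)$ over fractional local realizers is $\fldim(P)=\fldim(1,d;n)$, and every such $(c,w)$ is a feasible point of $\LP$, so this infimum is at least the optimum $c_0$ of $\LP$. I expect no real obstacle here; the only point to be careful about is the dual use of the realizer condition (on $(u,S)$ versus $(S,u)$) in handling constraint $Q$ for incomparable pairs.
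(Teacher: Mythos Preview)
Your proof is correct and follows essentially the same approach as the paper: swap summation to express $A(w)$ and $B(w)$ as sums of local measures, and verify $R$ and $Q$ pairwise using the realizer conditions. Your treatment of constraint~$Q$ is slightly more explicit than the paper's in separating the comparable and incomparable cases and flagging the role reversal $(S,u)$ for the latter, but the content is the same.
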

\begin{proof}
Observe that $A(w) = \sum_{a\in A} \mu(a,w)$. By the definition of $c$, and the fact that $|A(w)| = n$, we see $A(w) \leq cn$. Likewise, $B(w) = \sum_{b\in B} \mu(b,w) \leq \sum_{b\in B} c = c\binom{n}{d}$. In $P(1,d;n)$, each $a\in A$ is incomparable with $\binom{n-1}{d}$ elements of $B$. By the definition of a fractional local realizer, $\sum\{w(M): M\in\ple(P), u> v\text{ in }M\}\ge 1$ for each incomparable pair $(u,v)$. Therefore $R(w) \geq n\binom{n-1}{d}$.  Similarly, the definition of a fractional local realizer requires that every ordered pair $(u,v)$ of distinct elements with $u\leq v$ or $(v,u) \in \Inc P$ satisfies $\sum\{w(M): M\in\ple(P), u\le v\text{ in }M\}\ge 1$. Since there are $n\binom{n}{d}$ of these ordered pairs in $\Min P \times \Max P$ alone, $Q(w) \geq n \binom{n}{d}$. Therefore $(c,w)$ is a solutions for $\LP$. 
\end{proof}

Accordingly, to obtain a lower bound for 
$\fldim(1,d;n)$, we focus on finding a good
lower bound on $c_0$.  Although, the value of $c_0$ depends both
on $d$ and $n$, we will show that with $d$ fixed and
$n\rightarrow\infty$, $c_0$ tends to the minimum of $c_{\bst}$ and $c_{\bal}$.

\subsection{Properties of Optimum Solutions}

Let $\cgW_0$ denote the family of all local weight functions $w$ 
such that $(c_0,w)$ is an optimum solution for $\LP$.  In this
part of the argument, we develop four tie-breaking
rules: $\TB(1)$, $\TB(2)$, $\TB(3)$, and $\TB(4)$, which
will be used to  select a local weight function $w\in\cgW_0$
satisfying several useful properties.  Since the rules will not 
be presented at the same time, we use the ``computer science'' 
tradition for notation so that $\cgW_0$ consists of those 
optimum local weight functions which remain tied after all 
conditions developed to this point in the
argument have been applied.

The following elementary proposition is stated for emphasis.

\begin{proposition}\label{pro:two-tight}
If $w\in\cgW_0$, at least one of the two constraints
$A$ and~$B$ is tight, and at least one of the two constraints $R$ and $Q$
is tight.  
\end{proposition}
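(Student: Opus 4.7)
The plan is to prove both assertions by contradiction, exploiting the minimality of $c_0$ via two different perturbations of $w$. The observation driving everything is that constraints $A$ and $B$ are the only ones in which the parameter $c$ appears, and they are upper-bound constraints on linear functionals of $w$; by contrast, $R$ and $Q$ are lower-bound constraints that are homogeneous in $w$ and do not involve $c$.

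For the first assertion, I would assume for contradiction that $w \in \cgW_0$ but neither $A$ nor $B$ is tight, so $A(w) < c_0 n$ and $B(w) < c_0 \binom{n}{d}$ hold strictly. Since $w$ is fixed, I can pick $c' < c_0$ close enough to $c_0$ that $A(w) \le c' n$ and $B(w) \le c'\binom{n}{d}$ continue to hold. Constraints $R$ and $Q$ are untouched because they do not mention $c$. Hence $(c', w)$ is a feasible solution for $\LP$ with cost strictly less than $c_0$, contradicting the minimality of $c_0$.

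For the second assertion, I would assume that neither $R$ nor $Q$ is tight, so $R(w) > n\binom{n-1}{d}$ and $Q(w) > n\binom{n}{d}$. The idea is to shrink $w$ uniformly: replace $w$ by $\lambda w$ for some $\lambda \in (0,1)$ close to $1$. Since $R$ and $Q$ are linear in $w$, by continuity I can choose $\lambda$ so close to $1$ that $\lambda R(w) \ge n\binom{n-1}{d}$ and $\lambda Q(w) \ge n\binom{n}{d}$ still hold. Then $A(\lambda w) = \lambda A(w) \le \lambda c_0 n$ and $B(\lambda w) \le \lambda c_0 \binom{n}{d}$, so setting $c' = \lambda c_0 < c_0$ yields a feasible solution $(c', \lambda w)$ with smaller cost, contradicting optimality. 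Note that $\lambda w$ still takes values in $[0,1]$ because $\lambda < 1$, so it is a legitimate local weight function.

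There is no serious obstacle here; both steps are routine slackness arguments. The only thing worth being explicit about is that the two perturbations are genuinely different in character: in the first case one decreases $c$ alone while leaving $w$ fixed, and in the second case one must scale $w$ downward first to create the room needed to decrease $c$.
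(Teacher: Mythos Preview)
Your proof is correct; both slackness arguments are exactly the standard perturbations one would expect, and you have been careful about the constraint $w(M)\in[0,1]$ when scaling. The paper does not actually supply a proof for this proposition---it merely calls it elementary and states it for emphasis---so your argument fills in precisely what the paper leaves implicit.
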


Here is our first tie-breaking rule for the selection of $w$:

\begin{enumerate}[wide=0pt]
\item[{$\bTB\bf{(1)}$}.] Maximize the quantity $R(w)+Q(w)-A(w)-B(w)$. 
\end{enumerate}

We note that $\TB(1)$ prefers optimum solutions where there
is a ``surplus'' in the constraints $R$ and $Q$ and/or ``slack''
in the constraints $A$ and $B$.  The following elementary
proposition follows from tie-breaker $\TB(1)$.

\begin{proposition}\label{pro:tb1-positive}
For $M\in \ple(P)$, if $w\in\cgW_0$ and $w(M)>0$, then both $a(M)$ and $b(M)$
are positive, so that at least one of $q(M)$ and $r(M)$
are positive.
\end{proposition}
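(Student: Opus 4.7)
The plan is to argue by contradiction. Suppose $w \in \cgW_0$ and some $M \in \ple(P)$ has $w(M) > 0$ but $a(M) = 0$; the case $b(M) = 0$ will follow by symmetry. Since $P = P(1,d;n)$ has height two, every element of $P$ is either minimal or maximal, so $a(M) = 0$ forces $M$ to consist entirely of maximal elements. Setting aside the trivial case where $M$ is empty (in which case $M$ contributes nothing to any of $A$, $B$, $R$, $Q$ and may be given weight zero without affecting anything), we have $b(M) \ge 1$. The basic identity $r(M) + q(M) = a(M) \cdot b(M) = 0$ then tells us that $M$ contributes nothing to either constraint $R$ or constraint $Q$.

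Next, I define $w'$ by setting $w'(M) = 0$ and $w'(N) = w(N)$ for all $N \ne M$, and check that $(c_0, w')$ is still feasible for $\LP$: constraint $A$ is unchanged, constraint $B$ only becomes easier to satisfy since $B(w') = B(w) - b(M)\, w(M) \le c_0 \binom{n}{d}$, and constraints $R$ and $Q$ are unchanged. Thus $w'$ is itself an optimal solution for $\LP$, i.e., a candidate member of $\cgW_0$ prior to applying TB(1). A direct computation gives
\[
\bigl[R(w') + Q(w') - A(w') - B(w')\bigr] - \bigl[R(w) + Q(w) - A(w) - B(w)\bigr] = b(M)\, w(M) > 0,
\]
so $w'$ strictly beats $w$ on the TB(1) objective, contradicting the assumption that $w$ survives TB(1). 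The argument for $b(M) = 0$ is identical with the roles of $A$ and $B$ interchanged.

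The second clause of the proposition then follows immediately from the same identity $r(M) + q(M) = a(M) \cdot b(M)$: once both factors are at least $1$, their product is at least $1$, so at least one of $r(M)$ and $q(M)$ must be positive. I do not foresee any real obstacle in this argument; the whole proof is just a local perturbation exploiting the fact that $\ple$'s consisting exclusively of elements from one side of $P$ create ``slack'' on the cost side of $\LP$ while contributing nothing to the comparability constraints, so TB(1) actively prefers to zero them out.
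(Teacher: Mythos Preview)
Your argument is correct and essentially identical to the paper's own proof: both set $w'(M)=0$, observe that $A$, $R$, $Q$ are unchanged while $B$ strictly decreases, and conclude that $w'$ is an optimal solution strictly preferred under $\TB(1)$. Your treatment is slightly more careful in explicitly disposing of the empty-$\ple$ case and in deriving the second clause from the identity $r(M)+q(M)=a(M)b(M)$, but there is no substantive difference in method.
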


\begin{proof}
For contradiction, suppose $a(M)=0$. (The case for $b(M)=0$ is similar.) Therefore $b(M)>0$ and $q(M) = r(M)= 0$. Create a new local weight function $w'$ which agrees with $w$ on all ple's except $w'(M) =0$. Observe $A(w') = A(w)$, $B(w') = B(w) - b(M)w(M)$, $Q(w') = Q(w)$, and $R(w') = R(w)$.  Therefore $w'$ is a solution to $\LP$ with cost $c' \leq c_0$. Since $w(M)$ and $b(M)$ are positive, $w'$ is preferred to $w$ by $\TB(1)$, a contradiction.  
\end{proof}

So for the remainder of the argument, we discuss only $\ple$'s
$M\in\ple(P)$ for which both $a(M)$ and $b(M)$ are positive.
This guarantees that at least one of $q(M)$ and
$r(M)$ is positive.  If $M$ is a $\ple$ of this type, there is a
uniquely determined integer $s =s(M)$, called the \textit{block-length}, with $0\le s\le n-d$ so that 
$M$ has the following block-structure:
\begin{equation}\label{eqn:block}
M=[A_0<B_1<A_1<B_2<A_2<\dots<B_s<A_s<B_{s+1}],
\end{equation}
where (1)~$\{B_1,B_2,\dots,B_{s+1}\}$ is a family of pairwise disjoint
subsets of $\Max(P)$ with $B_i\neq\emptyset$ when $1\le i\le s$, and
(2)~$\{A_0,A_1,\dots,A_s\}$ is a family of pairwise disjoint subsets
of $\Min(P)$ with $A_i\neq\emptyset$ when $1\le i\le s$. Note that we
allow either or both of $A_0$ and $B_{s+1}$ to be empty.  Also note that
$q(M)$ and $r(M)$ are determined entirely by the block-length $s$ of
$M$ and the sizes of the blocks in the block-structure of $M$.  Neither
value is affected by the arrangement of elements in a block.

For the balance of the argument, whenever we refer to a $\ple$ $M$
of $P$, we will use the letter $s=s(M)$ to denote the block-length
of $M$ and we will assume that the block-structure of $M$ is given
by~\eqref{eqn:block}.  Also, when we refer to a pair $(b,a)$, it
will always be the case that $a$ and $b$ are positive integers with
$1\le a\le n$ and $1\le b\le \binom{n}{d}$.

The statement of the following proposition, which again follows
immediately from $\TB(1)$, uses a convention for
subscripts that will be used for the balance of this section.
When we are discussing a finite family $\{M_1,M_2,\dots,M_t\}$
of  $\ple$'s, for each $i\in \{1,\dots,t\}$, we will
use the terms in $(a_i,b_i,s_i, r_i,q_i,w_i)$  as abbreviations for the terms in $(a(M_i),b(M_i),s(M_i), r(M_i),q(M_i),w(M_i)).$
If we make changes to a local weight function $w$ to form a local 
weight function $w'$, the value of $w'(M_i)$ is abbreviated as $w'_i$.

\begin{proposition}\label{pro:4-way}
Let $w\in\cgW_0$.  Then
there do not exist $\ple$'s $M_1$ and $M_2$ such that:
\begin{enumerate}
\item $w_1>0$,
\item $r_1/a_1\le r_2/a_2$, $r_1/b_1\le r_2/b_2$,
$q_1/a_1\le q_2/a_2$, and $q_1/b_1\le q_2/b_2$, and
\item at least one of the four inequalities in the preceding requirement is strict.
\end{enumerate}
\end{proposition}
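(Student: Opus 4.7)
The plan is to prove the proposition by contradiction, invoking the tie-breaker $\TB(1)$. Suppose there exist $\ple$'s $M_1$ and $M_2$ satisfying the three hypotheses. The strategy is to construct a small perturbation $w'$ of $w$ that shifts weight from $M_1$ to $M_2$, remains a feasible optimum for $\LP$, and strictly increases the tie-breaking quantity $R(w) + Q(w) - A(w) - B(w)$, which contradicts $w \in \cgW_0$.

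Concretely, fix $\epsilon > 0$ small, set $\delta = \epsilon \cdot \min\{a_1/a_2,\ b_1/b_2\}$, and define $w'_1 = w_1 - \epsilon$, $w'_2 = w_2 + \delta$, with $w'_M = w_M$ for every other $\ple$ $M$. The values $a_1, b_1$ are positive by Proposition~\ref{pro:tb1-positive} since $w_1 > 0$, while $a_2, b_2$ are implicitly positive since otherwise the hypothesized ratios would be undefined; take $\epsilon$ small enough that $w'$ still takes values in $[0,1]$. Writing $\alpha = a_1/a_2$ and $\beta = b_1/b_2$, and assuming without loss of generality that $\alpha \le \beta$ so $\delta = \epsilon\alpha$, a direct calculation gives $A(w') - A(w) = 0$ and $B(w') - B(w) = \epsilon b_2(\alpha - \beta) \le 0$, while the hypothesis $r_1/a_1 \le r_2/a_2$ rearranges to $R(w') - R(w) = \epsilon(\alpha r_2 - r_1) \ge 0$, and similarly $Q(w') - Q(w) \ge 0$. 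Thus all four constraints of $\LP$ remain satisfied at the same cost $c_0$, so $(c_0, w')$ is also an optimum solution.

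The main work is to verify that the change in $R + Q - A - B$, namely
\[
\epsilon\bigl[(\alpha r_2 - r_1) + (\alpha q_2 - q_1) + b_2(\beta - \alpha)\bigr],
\]
is strictly positive in every case. Each of the three summands is nonnegative by the analysis above; I would establish strict positivity of at least one of them by a short case analysis on which of the four hypothesis inequalities is strict. If $\alpha < \beta$ the third term is strictly positive (since $b_2 > 0$); otherwise $\alpha = \beta$, and a brief algebraic check shows that a strict $r$-inequality (either $r_1/a_1 < r_2/a_2$ or $r_1/b_1 < r_2/b_2$) forces $r_1 < \alpha r_2$, with the analogous statement for $q$. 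In every case the improvement is strict, so $w'$ is strictly preferred to $w$ by $\TB(1)$, contradicting the selection of $w \in \cgW_0$. The only place requiring care is this final casework, where one must match each of the four possible locations of the strict inequality to the correct summand to ensure it is activated.
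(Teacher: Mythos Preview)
Your argument is correct and follows the same contradiction-via-$\TB(1)$ strategy as the paper: shift weight from $M_1$ to $M_2$ so that $A,B$ do not increase, $R,Q$ do not decrease, and at least one changes strictly. The only noteworthy difference is your choice of scaling. The paper transfers all of $w_1$ and picks the multiplier $\epsilon$ anywhere in the interval $\bigl[\max\{r_1/r_2,\,q_1/q_2\},\,\min\{a_1/a_2,\,b_1/b_2\}\bigr]$, which forces a separate treatment of the degenerate cases $r_2=0$ or $q_2=0$ (where the lower endpoint is undefined). By instead fixing the multiplier at the upper endpoint $\min\{\alpha,\beta\}$ and transferring only a small $\epsilon$, you sidestep that case split entirely; your inequalities $r_1\le\alpha r_2$ and $q_1\le\alpha q_2$ hold automatically even when $r_2$ or $q_2$ vanishes. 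Your final casework on $\alpha<\beta$ versus $\alpha=\beta$ is sound and covers all four locations of the strict inequality.
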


\begin{proof}
For contradiction, suppose there exist $M_1$ and $M_2$ with the listed properties.  First assume both $r_2$ and $q_2$ are positive.  
By property (2), we can choose $\epsilon$ with $\max\{ r_1/r_2, q_1/q_2\} \leq \epsilon \leq \min\{ a_1/a_2, b_1/b_2\}$. By (3), the inequality between $\epsilon$ and one of these four fractions is strict. 

Define a new weight function $w'$ which agrees with $w$ on all ples except $w'_1 = 0$ and $w'_2 = w_2 + \epsilon w_1$. 
Now the following inequalities verify that $w'$ is a solution to $\LP$. 
\begin{eqnarray*}
A(w') &=& A(w) - w_1a_1 + \epsilon w_1a_2 \leq A(w) - w_1a_1 + \frac{a_1}{a_2} w_1a_2 = A(w).\\
B(w') &=& B(w) - w_1b_1 + \epsilon w_1b_2 \leq B(w) - w_1b_1 + \frac{b_1}{b_2} w_1b_2 = B(w)\\
R(w') &=& R(w) - w_1r_1 + \epsilon w_1r_2 \geq R(w) - w_1r_1 + \frac{r_1}{r_2} w_1r_2 = R(w)\\
Q(w') &=& Q(w) - w_1q_1 + \epsilon w_1q_2 \geq Q(w) - w_1q_1 + \frac{q_1}{q_2} w_1q_2 = Q(w)
\end{eqnarray*}
So $w'$ is a solution to $\LP$ and, by (3), one of these inequalities is strict. 
Therefore 
\[R(w') + Q(w') -A(w') -B(w') > R(w) + Q(w) -A(w) - B(w).\]
Thus $w'$ is preferred to $w$ by $\TB(1)$, a contradiction.

Now assume $r_2=0$. The argument for when $q_2=0$ is almost identical. By property (2),  $r_1=0$ and therefore $q_1$ and $q_2$ are positive by Proposition~\ref{pro:tb1-positive}. Since $r_1=r_2=0$, property (3) states that $q_1/a_1 < q_2/a_2$ or $q_1/b_1 < q_2/b_2$. Select $\epsilon$ with $q_1/q_2 \leq \epsilon \leq \min\{ a_1/a_2, b_1/b_2\}$, noting that the inequality between $\epsilon$ and one of these three fractions is strict. When we define $w'$ as above, we see $R(w')  = R(w)$, $A(w') \leq A(w)$, $B(w') \leq B(w)$, and $Q(w') \geq Q(w)$ where one of the three  inequalities is strict. As a result, $w'$ is preferred to $w$ by $\TB(1)$, a contradiction. 
\end{proof}

For a pair $(b,a)$, we let $\ple(b,a)$ denote the set
of all $\ple$'s $M$ with $a(M)=a$ and $b(M)=b$.
We say that a $\ple$ $M\in\ple(b,a)$ is \textit{maximal-preserving}
if there is no $M'\in\ple(b,a)$ with $q(M')>q(M)$.  
Determining the structure of maximal-preserving
$\ple$'s is trivial, since a $\ple$ $M$ is maximal-preserving
if and only if it has block-length~$0$ and block-structure
$M=[B_0<A_1]$.   Also, if $M\in\ple(b,a)$ and $M$ is maximal-preserving, 
then $r(M)=0$ and $q(M)=ab$.

\begin{lemma}\label{lem:q+1}
Let $M_1\in\ple(b,a)$.
If $M_1$ is not maximal-preserving, 
there exists a $\ple$ $M_2\in\ple(b,a)$ with $q_2= q_1+1$.
\end{lemma}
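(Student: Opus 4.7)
The plan is to produce $M_2$ from $M_1$ by a single adjacent transposition inside the $\ple$. Because $M_1$ is not maximal-preserving, its block-length $s=s(M_1)$ is at least $1$, so in the block-structure \eqref{eqn:block} of $M_1$ there is an index $i\in\{1,\dots,s\}$ for which $A_i$ and $B_i$ are both nonempty. Fix such an $i$, let $S$ be the top element of $B_i$ in the order of $M_1$, and let $u$ be the bottom element of $A_i$. The block-structure forces $S$ and $u$ to be adjacent in $M_1$ with $S<u$ there, and I would define $M_2$ to be the $\ple$ obtained from $M_1$ by swapping the positions of $S$ and $u$, leaving every other element of $M_1$ fixed.

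The first thing to verify is that $M_2$ really is a $\ple$ of $P$. Since only the pair $\{u,S\}$ has its relative order altered, the only possible issue would be if $u<S$ in $M_2$ violates the order of $P$. But $u\in\Min(P)$ and $S\in\Max(P)$, so $u<S$ in $P$ holds precisely when $u\in S$. If $u$ were in $S$, the fact that $M_1$ respects $P$ would force $u<S$ in $M_1$, contradicting $S<u$ in $M_1$. Hence $u\notin S$, so $u$ and $S$ are incomparable in $P$, and either ordering is $P$-consistent.

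Finally, since $M_2$ is just a permutation of the elements of $M_1$, we have $a(M_2)=a$ and $b(M_2)=b$, so $M_2\in\ple(b,a)$. An adjacent swap changes the relative order of exactly one pair, namely $(u,S)$: in $M_1$ this pair satisfied $u>S$ (contributing $1$ to $r_1$), while in $M_2$ it satisfies $u<S$ (contributing $1$ to $q_2$). All other $\Min\times\Max$ pairs keep their relative order, so $q_2=q_1+1$, as required. I do not anticipate a serious obstacle: the only real tasks are choosing the transposition at a block-break with $i\ge 1$ and confirming that the swap is $P$-legal, both of which are immediate from the hypothesis that $M_1$ is itself a $\ple$ and is not maximal-preserving.
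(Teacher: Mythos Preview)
Your proof is correct and follows essentially the same approach as the paper: both exploit that $s_1\ge1$ to find a pair $(u,S)$ with $u\in A_i$, $S\in B_i$ and swap their relative order, changing exactly one $\Min\times\Max$ pair from ``$u>S$'' to ``$u<S$''. Your version is actually slightly cleaner, since you pick an adjacent pair and explicitly verify that $u\notin S$ (hence the swap is $P$-legal), whereas the paper's write-up leaves that check implicit.
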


\begin{proof}
If $s_1=0$, then $M_1$ is maximal-preserving,
so we know that $s_1\ge1$.  Therefore, both $B_1$ and $A_1$
are non-empty and $r_1>0$.  Let $S$ be any set in $A_1$, and 
let $u$ be any element of $B_1$.
Form a $\ple$ $M_2$ by setting:
\[
M_2=[A_0<B_1-\{S\}<\{u\}<\{S\}<A_1-\{u\}<B_2<A_2<\dots<A_{s_1}<B_{s_1+1}].
\]
Clearly, $M_2$ satisfies the requirements of the lemma.
\end{proof}

\begin{lemma}\label{lem:r-tight}
If $w\in\cgW_0$, then constraint~$R$ is tight.
\end{lemma}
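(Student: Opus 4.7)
The plan is a proof by contradiction. Assume the optimal weight function $w\in\cgW_0$ satisfies $R(w) > n\binom{n-1}{d}$, and set $\Delta_R = R(w) - n\binom{n-1}{d} > 0$. By Proposition~\ref{pro:two-tight}, constraint $Q$ must then be tight, i.e.\ $Q(w) = n\binom{n}{d}$. Since $R(w) > 0$, there exists some $\ple$ $M_1$ with $w_1 > 0$ and $r_1 > 0$; in particular, $M_1$ is not maximal-preserving. Invoking Lemma~\ref{lem:q+1}, pick $M_2\in\ple(b_1,a_1)$ with $q_2 = q_1 + 1$. Then automatically $a_2 = a_1$, $b_2 = b_1$, and $r_2 = r_1 - 1 \ge 0$, while $a_2$ and $b_2$ are both positive by Proposition~\ref{pro:tb1-positive}.

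The plan is to perturb $w$ in two stages. \emph{Stage one:} shift a small mass $\delta > 0$ from $M_1$ to $M_2$. Because $a_1 = a_2$ and $b_1 = b_2$, both $A$ and $B$ are unchanged; the quantity $R$ decreases by $\delta$, which is harmless as long as $\delta \le \Delta_R/2$; and $Q$ \emph{strictly increases} by $\delta$. Call the resulting function $w^{(1)}$; it is feasible for $\LP$ at cost $c_0$, and now both $R$ and $Q$ have positive slack. \emph{Stage two:} further reduce the weight on $M_2$ by a tiny $\delta' > 0$. If $\delta'$ is chosen so that $\delta' q_2 < \delta$ and, when $r_2 > 0$, also $\delta' r_2 < \Delta_R/2$, then constraints $R$ and $Q$ continue to hold, while constraints $A$ and $B$ each decrease by a strictly positive amount ($\delta' a_2$ and $\delta' b_2$). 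Hence the resulting weight function is feasible for $\LP$ at some cost $c' < c_0$, contradicting the optimality of $c_0$. This forces constraint $R$ to be tight.

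The only point to verify is that $\delta$ and $\delta'$ can simultaneously be taken strictly positive: choose $\delta = \min\{\Delta_R/2,\, w_1\}$, and since $q_2 = q_1+1 \ge 1$ the denominators in the conditions on $\delta'$ are nonzero. Everything else is direct evaluation of the four functionals $A,B,R,Q$ on the perturbed weight. The conceptual hurdle — and the reason a one-step argument does not work — is that a single swap $M_1 \mapsto M_2$ preserves $A,B,R+Q$ and so gives no immediate contradiction; its purpose is to convert surplus in $R$ into surplus in $Q$. Only after \emph{both} constraints $R$ and $Q$ have positive slack can a further weight reduction on the single $\ple$ $M_2$ safely decrease $A$ and $B$ and thereby the cost.
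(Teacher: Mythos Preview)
Your proof is correct, and Stage~1 is exactly the paper's argument: shift $\epsilon=\min\{w_1,\Delta_R/2\}$ from $M_1$ to the $\ple$ $M_2$ furnished by Lemma~\ref{lem:q+1}, leaving $A$ and $B$ unchanged while creating strict slack in both $R$ and $Q$.

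Where you diverge is in your Stage~2, and in your remark that ``a one-step argument does not work.'' In fact the paper stops after Stage~1. The perturbed function $w^{(1)}$ is still an optimal solution for $\LP$ at cost $c_0$ (since $A$ and $B$ are unchanged), and it still ties under $\TB(1)$ (since $R+Q-A-B$ is preserved), so $w^{(1)}\in\cgW_0$. But $w^{(1)}$ has neither $R$ nor $Q$ tight, directly contradicting Proposition~\ref{pro:two-tight}. Your Stage~2 is not wrong; it simply re-derives, in this particular instance, the half of Proposition~\ref{pro:two-tight} asserting that an optimal $w$ must have one of $R,Q$ tight. So the paper's route is shorter by one invocation of an already-established proposition, while yours is self-contained at the cost of an extra perturbation step.
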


\begin{proof}
Let $w\in\cgW_0$.  We assume that constraint~$R$ is not tight for
$w$ and argue to a contradiction.  By Proposition~\ref{pro:two-tight}, constraint~$Q$ must be tight.  Thus there
is a positive number $\delta$ so that 
$R(w)=n\binom{n-1}{d}+\delta$ while $Q(w)=n\binom{n}{d}$.  

Let $M_1$ be any $\ple$ with $r_1$ and $w_1$ both positive.
Then $M_1$ is not maximal-preserving.  Let $M_2$ be the
$\ple$ provided by Lemma~\ref{lem:q+1}.

Let $\epsilon=\min\{w_1, \delta/2\}$.
Form a local weight function $w'$ by making the following
two changes to $w$: set $w'_1=w_1-\epsilon$ and 
$w'_2=w_2+\epsilon$.  Then $A(w')=A(w)$ and $B(w')=B(w)$.
Furthermore, $R(w')=R(w)-\epsilon$ and
$Q(w')=Q(w)+\epsilon$.  Since $\epsilon\le\delta/2$,
$(c_0,w')$ is a feasible solution for $\LP$ with neither constraint~$R$
nor constraint~$Q$ being tight. The contradiction completes the
proof of the lemma.
\end{proof}
 
Analogous with our treatment for maximal-preserving $\ple$'s,
we will say that a $\ple$ $M\in\ple(b,a)$ is \textit{maximal-reversing}
if there is no $\ple$ $M'\in\ple(b,a)$ with $r(M')>r(M)$.
We are now ready for the analogue of Lemma~\ref{lem:q+1}.

\begin{lemma}\label{lem:r+1}
Let $M\in\ple(b,a)$ and let $s$ be the block-length of $M$.
Then $M$ is maximal-reversing if and only if $M$ satisfies the
following five properties:

\smallskip
\noindent
Property 1.\quad
If $0\le i\le s$, $u\in A_i$ and $S\in B_{i+1}$,
then $u\in S$.

\smallskip
\noindent
Property 2.\quad
If $1\le i\le s$ and $B_{i+1}\neq\emptyset$, then 
$|A_i|=1$.

\smallskip
\noindent
Property 3.\quad
If $1\le i\le s$ and $B_{i+1}\neq\emptyset$, then
$B_i$ consists of all sets $S\in\Max(P)$ such that
(1)~$A_{i-1}\subset S$ and (2)~$S$ contains no elements
of $A_{i+1}\cup A_{i+2}\cup\dots\cup A_s$.  

\smallskip
\noindent
Property 4.\quad 
$|B_{s+1}|=\max\{0, b-\binom{n-1}{d}\}$.

\smallskip
\noindent
Property 5.\quad
$|A_0|=\max\{0, a-(n-d)\}$.

Furthermore, if $M_1\in\ple(b,a)$ is not maximal-reversing, then 
there is $M_2\in\ple(b,a)$ with $r_2=r_1+1$.
\end{lemma}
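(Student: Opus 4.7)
My plan is to prove the ``if and only if'' and the final assertion together: for every way $M_1 \in \ple(b,a)$ can fail one of Properties~1--5, I will exhibit a specific $M_2 \in \ple(b,a)$ with $r_2 = r_1 + 1$. This shows $r$ can always be strictly incremented when any property fails, giving both the ``only if'' direction of the characterization and the final assertion at once. For the converse, when all five properties hold, the block sizes $(|A_0|,\dots,|A_s|,|B_1|,\dots,|B_{s+1}|)$ are pinned down by $(a,b,n,d)$ via Properties~2, 4, 5, so the formula $r(M) = \sum_{i=1}^{s}|A_i|(|B_1|+\dots+|B_i|)$ evaluates to a single fixed value; one then checks that this value matches a straightforward LP upper bound on $\max_{M' \in \ple(b,a)} r(M')$, so no larger $r$ is attainable.

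The archetype of the ``$+1$'' move comes from Property~1: if $u \in A_i$, $S \in B_{i+1}$, and $u \notin S$, then the $\ple$
\[M_2 = [\dots, B_i, A_i\setminus\{u\}, \{S\}, \{u\}, B_{i+1}\setminus\{S\}, A_{i+1}, \dots]\]
(with any empty blocks merged) reverses exactly the pair $(u,S)$ and preserves every other comparability. Validity holds because $u \notin S$ and $u$ remains below each $S' \in B_{i+1}\setminus\{S\}$ containing it; hence $r_2 = r_1 + 1$. I would handle violations of the other four properties by the same template---slide one element or one set across a single adjacent block boundary so as to create exactly one new reversed pair---preceded, when necessary, by an identity swap that preserves block sizes, $r$, and validity. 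For instance, when Property~4 fails I pick $S \in B_{s+1}$ and slide it down across some $u \in A_s$ with $u \notin S$; in the degenerate case $A_s \subseteq S$ I first replace an element of $A_s$ by a fresh $v$ absent from every set appearing in $M_1$, which is possible because $n \gg d$. Property~5 is dual, Property~2 is handled by splitting a too-large $A_i$ and moving a set of $B_{i+1}$ across the split, and Property~3 is handled by inserting a missing witness set $S^*$ into $B_i$ while deleting a compensating set from a block where it contributes nothing to $r$.

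The main obstacle lies in the case analysis for the identity swaps: one must verify that each replacement element or set collides with no existing member of the $\ple$ and introduces no new comparability violation. The slackness of $n$ versus $d$ makes each verification routine, but the interplay of the properties---in particular, the fact that a naive slide across a boundary can change $r$ by more than one when the adjacent blocks are large---forces care to ensure that exactly one new reversed pair is created. The invariant driving every construction is that only a single ordered pair in $\Min(P) \times \Max(P)$ has its relative order changed, so the $r$-count rises by exactly one each time.
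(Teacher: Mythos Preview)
Your overall strategy---show that whenever one of the five properties fails, $r$ can be incremented by exactly one---is the same as the paper's, and your construction for Property~1 is exactly right. But several of the other cases do not work as you sketch them.

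The sharpest problem is Property~2. You propose to split the too-large $A_i$ and slide a set $S\in B_{i+1}$ across the split. In the natural order of cases you will already have Property~1 in hand, so \emph{every} element of $A_i$ belongs to \emph{every} $S\in B_{i+1}$; placing such an $S$ below any element of $A_i$ violates the linear-extension condition, so the move is illegal. The paper's fix is not to move $S$ but to manufacture a new set $T=(S\setminus\{u'\})\cup\{v\}$, where $u'\in A_i$ and $v\in S'\setminus S$ for some $S'\in B_i$; this $T$ omits $u'$, is not already in $M_1$, and can legally sit just below $u'$, yielding $r_2=r_1+1$. That set-surgery idea is the missing ingredient.

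Your Property~3 sketch is also inaccurate: the compensating set you delete (from $B_{i+1}$) does \emph{not} contribute nothing to $r$; it contributes $|A_{i+1}|+\dots+|A_s|$, and the net change is $+1$ only because Property~2 has already forced $|A_i|=1$, so the inserted set in $B_i$ contributes exactly one more reversal than the deleted one. For Properties~4 and~5, the paper first observes that once Properties~1--3 hold, one of~4 and~5 fails if and only if the other does, and then handles both at once by moving an element up out of $A_0$ while swapping a set in $B_1$ for a freshly built one. Your ``fresh $v$ absent from every set appearing in $M_1$'' need not exist when $b$ is large (the sets in $M_1$ can cover all of $[n]$), so that route is not available in general. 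Finally, for the converse direction the paper avoids any LP computation: it simply notes that any two $\ple$'s satisfying all five properties have identical block sizes and hence the same value of $r$, so each of them matches the (necessarily existing) maximal-reversing one.
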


\begin{proof}
Before we present the proof, we pause to make two observations.
First, it is easy to see that a $\ple$ satisfying Properties~1, 2 and~3,
and one of Properties~4 and~5 satisfies all five properties.
We have elected to state the hypothesis in this form so that
the argument will be symmetric.

Second, given a pair $(b,a)$, if $M_1$ and $M_2$ belong to
$\ple(b,a)$ and both $M_1$ and $M_2$ satisfy all five properties,
then it is easy to see that $M_1$ and $M_2$ have the same
block-length and the same block-structure.  Therefore, $r_1=r_2$ and
$q_1=q_2$.  Accordingly, to complete the proof of the lemma, we
need only show that if $M_1\in\ple(b,a)$ violates one of the five 
properties in the hypothesis, then there is a $\ple$ $M_2\in
\ple(b,a)$ with $r_2=r_1+1$.  

Now suppose that $M_1$ does not satisfy Property~1.
Then there is some $i$ with $0\le i\le s_1$ for which
there is an element $u\in A_i$ and a set $S\in B_{i+1}$
such that $u\not\in S$.  In this case, we form a $\ple$
$M_2\in\ple(b,a)$ by setting:
\begin{multline*}
M_2=[A_0<B_1<A_1<B_2<\dots<A_i-\{u\}<\{S\}<\{u\}<B_{i+1}-\{S\}<\\
 <A_{i+1}<B_{i+2}<\dots<A_{s_1}<B_{s_1+1}].
\end{multline*}
Then $r_2=r_1+1$.  Accordingly, we may assume that
$M_1$  satisfies Property~1.

Now suppose that $M_1$ violates Property~2.  Then there is
some $i$ with $1\le i\le s_1$ such that $B_{i+1}\neq\emptyset$ and
$|A_i|\ge2$.  Let $u$ and $u'$ be distinct elements of $A_i$.  
Then $u,u'\in S$ for every set $S\in B_{i+1}$ by Property 1.  Fix a set $S\in
B_{i+1}$.  Since $i\ge 1$, we know $B_i\neq\emptyset$.  Let
$S'$ be any set in $B_i$, and let $v$ be any element of $S'-S$.
Then let $T$ be the set obtained from $S$ by removing $u'$ and
replacing it with $v$.  Clearly, the set $T$ does not
belong to $M_1$.  Form $M_2\in\ple(b,a)$ by setting:
\begin{multline*}
M_2=[A_0<B_1<A_1<B_2<\dots<A_i-\{u'\}<\{T\}<\{u'\}<B_{i+1}-\{S\}<\\
 <A_{i+1}<B_{i+2}<\dots<A_{s_1}<B_{s_1+1}].
\end{multline*}
Again, we note that $r_2=r_1+1$.  Accordingly, we may assume
that $M_1$ satisfies Properties~1 and~2.

Now suppose that $M_1$ violates Property~3.  Let $i$ be the
least integer with $1\le i\le s_1$ for which $B_{i+1}\neq
\emptyset$ and there is a set $T$ such that (1)~$A_{i-1}\subset 
T$; (2)~$T$ contains no element of $A_i\cup A_{i+1}\cup\dots\cup A_{s_1}$; 
and (3)~$T\not\in B_i$.  Clearly, the set $T$ is not in $M_1$.  Let
$S$ be any set in $A_{i+1}$.  Form $M_2\in\ple(b,a)$ by setting:
\begin{multline*}
M_2=[A_0<B_1<A_1<B_2<\dots<B_{i}\cup\{T\}<A_{i}<B_{i+1}-\{S\}<\\
 <A_{i+1}<B_{i+2}<\dots<A_{s_1}<B_{s_1+1}].
\end{multline*}

It follows that $M_1$ satisfies Properties~1, 2 and~3.  In view of
our observations at the start of the proof, we are left with the
case that $M_1$ violates both Properties~4 and~5.  So the inequalities
$|B_{s_1+1}|\ge\max\{0, b-\binom{n-1}{d}\}$ and $|A_0|\ge\max\{
0, a-(n-d)\}$ must both be strict.  Now
let $u\in A_0$ and $S\in B_1$.  It follows that there
is a set $T\in\Max(P)$ such that $T$ contains no element
of $\{u\}\cup A_1\cup\dots\cup A_{s_1}$.  Then $T$ is not in
$M_1$.  Form a $\ple$ $M_2$ by setting:
\begin{multline*}
M_2=[A_0-\{u\}<\{T\}<\{u\}< B_1-\{S\}<A_1<B_2<A_2<\dots<
 B_{s_1}<A_{s_1}<B_{s_1+1}].
\end{multline*}
Since $r_2=r_1+1$, the contradiction completes the proof of the lemma.
\end{proof}

For the balance of the proof, given a pair $(b,a)$, we let
$M(b,a)$ denote a maximal-reversing $\ple$ from $\ple(b,a)$, and
we abbreviate $r(M(b,a))$ and $q(M(b,a))$ as $r(b,a)$ and
$q(b,a)$, respectively.

Now for the second tie-breaking rule:

\begin{enumerate}[wide=0pt]
\item[{$\bTB\bf{(2)}$}.] Minimize the number $t_2$ of $\ple$'s in $\ple(P)$
which are assigned positive weight by $w$ and are
neither maximal-preserving nor maximal-reversing.
\end{enumerate} 

\begin{lemma}\label{lem:decisive}
The value of $t_2$ is zero, i.e., there is a local weight
function $w\in\cgW_0$ such that if $w(M)>0$, then either
$M$ is maximal-preserving or maximal-reversing.
\end{lemma}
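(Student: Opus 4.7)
The plan is to argue by contradiction: assume $w\in\cgW_0$ achieves the minimum $t_2$ guaranteed by $\TB(2)$ but $t_2>0$, so there is some $M_0$ with $w(M_0)>0$ that is neither maximal-preserving nor maximal-reversing. I want to redistribute the weight $w(M_0)$ onto two $\ple$'s sharing the same $(a,b)$ profile as $M_0$ — one maximal-preserving, one maximal-reversing — in such a way that all four quantities $A,B,R,Q$ are preserved. This will produce a new $w'\in\cgW_0$ with strictly fewer ``mixed'' $\ple$'s in its support, contradicting the choice of $w$.

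Concretely, set $a=a(M_0)$ and $b=b(M_0)$, both positive by Proposition~\ref{pro:tb1-positive}. Let $M_{mp}\in\ple(b,a)$ be the maximal-preserving $\ple$ (block-length $0$, so $r=0$ and $q=ab$), and let $M^{*}=M(b,a)$ be the maximal-reversing $\ple$ furnished by Lemma~\ref{lem:r+1}, so $r(M^{*})=r(b,a)$ and $q(M^{*})=ab-r(b,a)$. Since $M_0,M_{mp},M^{*}$ all lie in $\ple(b,a)$, any weight transfer among them leaves $A$ and $B$ untouched; and because $r+q=ab$ on this set, preserving $R$ automatically preserves $Q$.

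If I move $\alpha$ units of weight from $M_0$ to $M_{mp}$ and $\beta$ units to $M^{*}$ with $\alpha+\beta=w(M_0)$, the change in $R$ is $\beta\,r(b,a)-w(M_0)\,r(M_0)$, so I would choose $\beta=w(M_0)\,r(M_0)/r(b,a)$ and $\alpha=w(M_0)-\beta$, making $R$ (and thus $Q$) invariant. The hypothesis that $M_0$ is not maximal-preserving forces $r(M_0)>0$, giving $\beta>0$; the hypothesis that $M_0$ is not maximal-reversing forces $r(M_0)<r(b,a)$, giving $\alpha>0$ (and $\beta<w(M_0)$). So $w'$ is a legitimate local weight function, lies in $\cgW_0$ with $R+Q-A-B$ unchanged so that $\TB(1)$ is preserved, and has at least one fewer mixed $\ple$ in its support than $w$ — contradicting $\TB(2)$. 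I do not expect any serious obstacle here: the key insight is that restricting to a fixed $\ple(b,a)$ collapses the four LP constraints into the single one-dimensional equation $\beta\,r(b,a)=w(M_0)\,r(M_0)$, which is trivially solvable in the required open interval precisely thanks to the strict inequalities supplied by the hypothesis on $M_0$.
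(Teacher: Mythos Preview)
Your proposal is correct and follows essentially the same approach as the paper: both argue by contradiction, pick a mixed $\ple$ $M_0$ with positive weight, and redistribute its weight onto a maximal-preserving and a maximal-reversing $\ple$ in the same class $\ple(b,a)$ so that $A,B,R,Q$ are all preserved. Your direct computation $\beta=w(M_0)\,r(M_0)/r(b,a)$ is algebraically identical to the paper's split $w_1 i_2/(i_2+i_3)$ once one unwinds $i_2=r_1$ and $i_3=r(b,a)-r_1$.
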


\begin{proof}
We argue by contradiction. Let $w\in\cgW_0$.  Then
there is a $\ple$ $M_1$ with $w_1>0$ such that
$M_1$ is neither maximal-preserving nor maximal-reversing.
Since $M_1$ is not maximal-preserving, $r_1>0$.  If $q_1=0$,
the block-length of $M_1$ is~$1$ and the 
block-structure of $M_1$ is $[B_1<A_1]$ which implies $M_1$ is 
maximal-reversing.  It follows that $q_1>0$.

Applying Lemma~\ref{lem:q+1} repeatedly to $M_1$, let $i_2$ be the largest
integer for which there is a $\ple$ $M_2$ such that
$a_2=a_1$, $b_2=b_1$, $q_2=q_1+i_2$ (and $r_2=r_1-i_2$).
Note that $i_2$ is positive and $M_2$ is maximal-preserving.
 
Applying Lemma~\ref{lem:r+1} repeatedly to $M_1$, let $i_3$ be the largest
integer for which there is a $\ple$ $M_3$ such that
$a_3=a_1$, $b_3=b_1$, $r_3=r_1+i_3$ (and $q_3=q_1-i_3$).
Again, we note that $i_3>0$ and $M_3$ is maximal-reversing.

We determine a local weight function $w'$ by making the following
three changes to $w$:  Set $w'_1=0$, $w'_2=w_2+w_1i_3/(i_2+i_3)$ 
and $w'_3=w_3+w_1i_2/(i_2+i_3)$.  With these values
$A(w')=A(w)$, $B(w')=B(w)$, $Q(w')=Q(w)$ and $R(w')=R(w)$, so
that $w'\in\cgW_0$.  However,
the value of $t_2$ has gone down by one.  This observation completes
the proof.
\end{proof}

Here is an another easy consequence of $\TB(1)$ and $\TB(2)$.

\begin{lemma}\label{lem:completely-regular}
Let $w\in\cgW_0$ and let $M$ be a $\ple$ with $w(M)>0$.
Then the following statements hold:
\begin{enumerate}
\item If $M$ is maximal-preserving, then $a(M)=n$ and 
$b(M)=\binom{n}{d}$ so that $M=L=[\Min(P)<\Max(P)]$ is a linear extension of
$P$.
\item If $M$ is maximal-reversing, $s=s(M)$ and
$k=|A_1|+|A_2|+\dots+|A_s|$, then $|B_1|=\binom{n-k}{d}$.
\end{enumerate}
\end{lemma}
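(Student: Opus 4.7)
My plan is to handle the two parts separately, each using Proposition~\ref{pro:4-way} together with the characterization of maximal-reversing $\ple$'s in Lemma~\ref{lem:r+1}.

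For Part~(1), I would compare the given maximal-preserving $M$ with the full linear extension $L = [\Min(P) < \Max(P)]$. Both are maximal-preserving, so $r(M) = r(L) = 0$; also $q(M) = a(M)\,b(M)$, while $a(L) = n$, $b(L) = \binom{n}{d}$, and $q(L) = n\binom{n}{d}$. Applied to $(M_1, M_2) = (M, L)$, the four inequalities in Proposition~\ref{pro:4-way} collapse to $r_1/a_1 = r_2/a_2 = 0$, $r_1/b_1 = r_2/b_2 = 0$, $q_1/a_1 = b(M) \le \binom{n}{d} = q_2/a_2$, and $q_1/b_1 = a(M) \le n = q_2/b_2$. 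If $a(M) < n$ or $b(M) < \binom{n}{d}$, at least one of the last two is strict, and Proposition~\ref{pro:4-way} contradicts $w(M) > 0$. Hence $a(M) = n$, $b(M) = \binom{n}{d}$, and $M = L$.

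For Part~(2), I would lean on Lemma~\ref{lem:r+1}. Property~1 at $i = 0$ forces $A_0 \subset S$ for every $S \in B_1$, and Property~3 at $i = 1$ (applicable when $B_2 \neq \emptyset$) pins down $B_1$ exactly as the family of $d$-subsets of $\Min(P)$ containing $A_0$ and disjoint from $A_1 \cup \cdots \cup A_s$, giving $|B_1| = \binom{n - |A_0| - k}{d - |A_0|}$. This simplifies to $\binom{n-k}{d}$ precisely when $|A_0| = 0$, which by Property~5 is equivalent to $a(M) \le n - d$. The boundary cases ($s \le 1$ or $B_{s+1} = \emptyset$) would be dispatched by direct inspection using Properties~4 and~5.

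The main obstacle is establishing $|A_0| = 0$ for every maximal-reversing $M$ with $w(M) > 0$. My approach is by contradiction: assume $M_1$ is maximal-reversing with $|A_0| \ge 1$ and $w_1 > 0$, and construct a competitor $\ple$ $M_2$ by relocating an element $u_0 \in A_0$ and simultaneously swapping one set $S \in B_1$ for a $d$-subset of $\Min(P) \setminus (\{u_0\} \cup A_1 \cup \cdots \cup A_s)$ not already used by $M_1$, so that $M_2$ remains a valid $\ple$ of $P$. After tracking the resulting changes in $a, b, r, q$, I would invoke Proposition~\ref{pro:4-way} or tie-breaker $\bTB(1)$ to contradict the optimality of $w$. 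The delicate points are verifying that $M_2$ respects the poset order on comparable pairs and arranging the swap so that the four ratio inequalities in Proposition~\ref{pro:4-way} go in the required direction with at least one strict.
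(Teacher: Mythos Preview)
Your Part~(1) is correct and essentially identical to the paper's argument.

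Your Part~(2), however, is built on a false premise. You assert that $\binom{n-|A_0|-k}{d-|A_0|}$ simplifies to $\binom{n-k}{d}$ \emph{precisely} when $|A_0|=0$, and you then make proving $|A_0|=0$ the ``main obstacle.'' But $|A_0|=0$ is simply not true for every maximal-reversing $\ple$ with positive weight: the saturated $\ple$ $M_\sat=M\bigl(\binom{n}{d},n\bigr)$, which is central to the rest of the argument, has $a=n$ and hence $|A_0|=d>0$ by Property~5. Your proposed swap-and-relocate construction therefore cannot succeed. What you overlooked is that whenever $|A_0|>0$, Property~5 forces $k=a-|A_0|=n-d$, and then both $\binom{n-|A_0|-k}{d-|A_0|}$ and $\binom{n-k}{d}$ collapse to $\binom{d-|A_0|}{d-|A_0|}=1=\binom{d}{d}$. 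So the formula already gives the right answer in that regime, and there is nothing to prove.

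The actual gap is the one you waved away as a boundary case. When $s=1$ and $A_0=B_2=\emptyset$, the block-structure is $[B_1<A_1]$, and Properties~1--5 of Lemma~\ref{lem:r+1} are all vacuous or trivially satisfied regardless of $|B_1|$: any $B_1$ consisting of $d$-sets disjoint from $A_1$ yields a maximal-reversing $\ple$. ``Direct inspection using Properties~4 and~5'' cannot pin down $|B_1|$ here. The paper closes this case with Proposition~\ref{pro:4-way}: if $|B_1|<\binom{n-a_1}{d}$, adjoin one more admissible set to $B_1$ to get $M_2$ with $a_2=a_1$, $b_2=b_1+1$, $r_2=r_1+a_1$, and $q_2=q_1=0$; the pair $(M_1,M_2)$ then violates the proposition since $r_2/a_2>r_1/a_1$ while the other three ratios are equal.
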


\begin{proof}
To prove the first statement, we
assume that $M_1$ satisfies $w_1>0$, $r_1=0$ and
$a_1+b_1<n+\binom{n}{d}$.  We note that $q_1=a_1b_1$ and
$r_1=0$.  Therefore (1)~$q_1/a_1=b_1\le \binom{n}{d}$ and
(2)~$q_1/b_1=a_1\le n$.  Then let $M_2$ be a maximal-preserving
linear extension of $P$ with block-structure $M_2=[\Min(P)<\Max(P)]$.
It follows that the pair $(M_1,M_2)$ violates Proposition~\ref{pro:4-way}.

For the second statement, let $M_1$ be a maximal-reversing
$\ple$ with $w_1>0$, and let $k=|A_1|+|A_2|+\dots+|A_{s_1}|$.
If $a_1\ge n-d$, then $k=n-d$ and $|B_1|=1$, as required.  Now 
suppose that $a_1<n-d$.  Then $A_0=\emptyset$.  If
$s_1\ge2$, then $|B_1|=\binom{n-k}{d}$ follows
from Property~3.  

Now suppose $s_1=1$.  Then
$M_1$ has block-structure $[B_1<A_1]$.  Therefore $r_1=a_1b_1$ 
and $q_1=0$.
Suppose that $|B_1|<\binom{n-k}{d}=\binom{n-a_1}{d}$.
Let $M_2$ be a $\ple$ obtained from $M_1$ simply by adding to
$B_1$ a set $S$ which does contain any of the minimal elements
in $A_1$ and does not belong to $B_1$.  Then
$a_2=a_1$, $b_2=b_1+1$, $r_2=r_1+b_1$ and $q_2=q_1=0$. It
follows that the pair $(M_1,M_2)$ violates Proposition~\ref{pro:4-way}. 
\end{proof}

We will say that a maximal-reversing $\ple$ $M$ of block-length~$s$ 
is \textit{block-regular} if the following two conditions are met:

\begin{enumerate}
\item Either $A_0=\emptyset$ or $a=n$.
\item Either $b=\binom{n}{d}$ or $b=\binom{n-|A_s|}{d}$.
\end{enumerate}

Here is our third tie-breaking rule:

\begin{enumerate}[wide=0pt]
\item[{$\bTB\bf{(3)}$}.] Minimize the number $t_3$ of $\ple$'s in $\ple(P)$
which are assigned positive weight by $w$ and are maximal-reversing but 
not block-regular.
\end{enumerate} 

\begin{lemma}\label{lem:block-regular}
The value of $t_3$ is $0$, i.e., there is an optimum local weight function $w\in\cgW_0$
so that if $M\in\ple(P)$ and $w(M)>0$, then either $M$ is a 
maximal-preserving linear extension of $P$, or $M$ is block-regular.
\end{lemma}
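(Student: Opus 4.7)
The plan is to argue by contradiction along the lines of Lemma~\ref{lem:decisive}: assume $t_3 > 0$ for the chosen $w \in \cgW_0$, fix a maximal-reversing ple $M_1$ with $w_1 > 0$ that is not block-regular, and construct a modified weight function $w' \in \cgW_0$ with strictly smaller $t_3$. The replacement will transfer the weight of $M_1$ to a small family of block-regular ples in a way that preserves all four LP quantities $A(w)$, $B(w)$, $R(w)$, and $Q(w)$; then $\TB(1)$ and $\TB(2)$ are automatic (since $R+Q-A-B$ is preserved and only maximal-reversing replacements are introduced), yielding $w' \in \cgW_0$ and contradicting the minimality of $t_3$.

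The foundational observation, which I would establish first, is that within the family of maximal-reversing ples sharing middle sizes $(|A_1|,\ldots,|A_s|)$ and block-length $s$, the only two free parameters are $|A_0|$ (which ranges from $0$ to $d$ by Property~5 and governs condition~(1)) and $|B_{s+1}|$ (which by Properties~2 and~4 is nonzero only when $|A_s|=1$ and otherwise governs condition~(2)). Crucially, $|B_1|=1$ for every choice of $|A_0|$ whenever $a > n-d$, and neither $|A_0|$ nor $|B_{s+1}|$ affects the reversal count $r$. A short Pascal's-identity calculation then yields $\sum_{i=1}^{s}|B_i|=\binom{n-|A_s|}{d}$ whenever condition~(1) holds, which shows that the two candidate values of $b$ in condition~(2) correspond exactly to $|B_{s+1}|\in\{0,\binom{n-1}{d-1}\}$ (the latter only attainable when $|A_s|=1$).

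I then split into two cases. When $|A_s^{(1)}|\ge 2$, Property~2 forces $|B_{s+1}^{(1)}|=0$, so the Pascal identity shows that condition~(2) is automatic given condition~(1); hence if $M_1$ is not block-regular, condition~(1) must fail. I would take $M_2$ to be $M_1$ with $A_0^{(1)}$ deleted (so $a_2=n-d$, $A_0^{(2)}=\emptyset$) and $M_3$ to be $M_1$ with $A_0^{(1)}$ padded to size $d$ (so $a_3=n$); both are maximal-reversing, block-regular, and have the same values of $r$, $b$, and $|A_s|$ as $M_1$. Redistributing $w_1$ to $M_2$ and $M_3$ proportional to $n-a_1$ and $|A_0^{(1)}|$ (with denominator $n-a_1+|A_0^{(1)}|=d$) preserves all four LP constraints, since $q$ depends linearly on $|A_0|$ and the other three are either unchanged or preserved by weight conservation.

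When $|A_s^{(1)}|=1$, both conditions can independently fail and I would use a four-way decomposition. For $(p,t)\in\{0,d\}\times\{0,\binom{n-1}{d-1}\}$, let $M_{pt}$ denote the block-regular ple obtained from $M_1$ by resetting $|A_0|$ to $p$ and $|B_{s+1}|$ to $t$, and assign weight $\lambda_{pt}=\alpha_p\beta_t/w_1$ where $\alpha_p$ and $\beta_t$ are marginal weights along the $|A_0|$- and $|B_{s+1}|$-axes chosen to match the observed marginals $a_1$ and $b_1$. The $A$ and $B$ constraints then follow from the correct marginals, and $R$ is automatic because $r_{pt}=r_1$ for every corner. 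The delicate check is the $Q$ constraint, which is bilinear in $(p,t)$ because $q_{pt}=r_1+a_{pt}b_{pt}$ with $a_{pt}=(n-d)+p$ and $b_{pt}=\binom{n-1}{d}+t$ affine; here the tensor-product structure of $\lambda_{pt}$ is precisely designed to match the cross term $p_0\theta_0$, and this is the main technical obstacle. If only one of the two conditions fails, the four-way decomposition degenerates to a two-way split, and in every case $M_1$ is replaced by strictly block-regular ples, so $t_3$ strictly decreases, contradicting minimality.
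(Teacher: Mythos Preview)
Your foundational observation has a genuine gap: you assert that for a maximal-reversing $M$ the only free parameters (beyond the middle $A$-block sizes) are $|A_0|$ and $|B_{s+1}|$, and that a Pascal computation gives $\sum_{i=1}^{s}|B_i|=\binom{n-|A_s|}{d}$ whenever condition~(1) holds. This is false. Property~3 determines $|B_i|$ only for those $i$ with $B_{i+1}\neq\emptyset$; when $B_{s+1}=\emptyset$ the top block size $|B_s|$ is a genuinely free parameter, and your Pascal telescoping only yields $\sum_{i=1}^{s-1}|B_i|=\binom{n-|A_s|-1}{d}$. Consequently a maximal-reversing ple with $A_0=\emptyset$, $|A_s|\ge2$, $B_{s+1}=\emptyset$, and $|B_s|$ strictly below its maximal value $\binom{n-|A_s|-1}{d-1}$ satisfies condition~(1) but violates condition~(2), and your Case~1 argument (which only adjusts $|A_0|$) cannot repair it. The same issue undermines Case~2: when you reset $|A_0|$ to $0$ the Property-3 value of $|B_1|$ jumps from $1$ to $\binom{n-s}{d}$, so your ``corners'' $M_{pt}$ are either not maximal-reversing or have $r_{pt}\neq r_1$, invalidating both the $R$ and $Q$ checks.

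The paper's proof avoids this by parametrizing directly with the pair $(b,a)$. In the main case $a_1\le n-d$ and $b_1<\binom{n-1}{d}$ (where condition~(1) already holds and it is precisely $|B_{s_1}|$ that is intermediate), one redistributes the weight of $M_1$ to the two block-regular ples $M_2$ and $M_3$ with $a_2=a_3=a_1$ and block-lengths $s_1-1$ and $s_1$; here $b_2<b_1<b_3$, and the linearity $r_1-r_2=(b_1-b_2)|A_{s_1}|$, $r_3-r_2=(b_3-b_2)|A_{s_1}|$ makes the convex combination preserve all four constraints. The remaining boundary cases $\binom{n-1}{d}<b_1<\binom{n}{d}$ and $n-d<a_1<n$ are then handled by analogous two-point shifts along the $b$- and $a$-axes.
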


\begin{proof}
We argue by contradiction and assume that $t_3>0$.  Let $w\in\cgW_0$ and
consider the non-empty family $\cgF$  of all $\ple$'s assigned 
positive weight which are maximal-reversing but not block-regular.  We show 
that we can devise a sequence of weight shifts to decrease the value
of $t_3$. Describing these shifts involves three cases, and we
give full details for one of them.
First, suppose that $M_1\in\cgF$, $1\le a_1\le n-d$ and $1\le b_1 < \binom{n-1}{d}$. In this case, we must have $0<|B_{s_1}|
<\binom{n-|A_{s_1}|}{d-1}$.  Note that this requires $s_1\ge2$.

Let $M_2$ be a block-regular $\ple$ with $a_2=a_1$ and
$s_2=s_1-1$.  Also, let $M_3$ be a block-regular $\ple$ with $a_3=a_1$ and
$s_3=s_1$.  Note that $b_2<b_1<b_3$.

Then there are positive numbers $\lambda_2$ and $\lambda_3$ with
$\lambda_2+\lambda_3=1$ so that $b_1=\lambda_2 b_2+\lambda_3b_3$.
Simple calculations then show that $r_1=\lambda_2 r_2+\lambda_3 r_3$ and
$q_1=\lambda_2q_2+\lambda_3 q_3$.  Form a local weight function
$w'$ from $w$ by making the following three changes:
$w'_1=0$, $w'_2=w_2+\lambda_2 w_1$ and $w'_3=w_3+\lambda_3 w_1$.
Then $A(w')=A(w)$, $B(w')=B(w)$, $R(w')=R(w)$ and $Q(w')=Q(w)$, so
$w'\in\cgW_0$.  However, the value of $t_3$ has gone down by~$1$.
The contradiction shows that if $M_1\in\cgF$, then either $n-d<a_1<n$ or
$\binom{n-1}{d}<b_1<\binom{n}{d}$. 

If $\binom{n-1}{d}<b_1<\binom{n-1}{d}$, then we shift weight from
$M_1$ to $M_2=M(\binom{n-1}{d},a_1)$ and $M_3=M(\binom{n}{d},a_1)$.  
If $n-d<a_1<n$, we shift weight from $M_1$ to $M_2=M(b_1,n-d)$ 
and $M_3=M(b_1,n)$.  After at most two applications of such
shifts, the value of $t_3$ has been decreased.  
\end{proof}

Considering the structure of $\LP$, the following terminology
is natural.  Let $M\in\ple(b,a)$  We will say that:

\begin{enumerate}
\item $M$ is \textit{balanced} if $bn = a\binom{n}{d}$.
\item $M$ is $A$-\textit{heavy} if $bn < a\binom{n}{d}$.
\item $M$ is $B$-\textit{heavy} if $bn> a\binom{n}{d}$.
\end{enumerate}
We note that linear extensions of $P$ are balanced, and so is the
block-regular $\ple$ $M(\binom{n-1}{d},n-d)$.
Due to the limitations of integer arithmetic,
we may expect that many maximal-reversing $\ple$'s are not balanced.
On the other hand, the following lemma asserts that collectively,
the family of $\ple$'s assigned positive weight is balanced.

\begin{lemma}\label{lem:A-and-B-tight}
If $w\in \cgW_0$, then both constraints $A$ and $B$ are tight.
\end{lemma}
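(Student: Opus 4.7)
The plan is a proof by contradiction: suppose some $w \in \cgW_0$ has constraint $A$ not tight; the case when $B$ is not tight is symmetric. Then by Proposition~\ref{pro:two-tight}, $B$ is tight, so $c_0 = B(w)/\binom{n}{d}$ while $A(w) < c_0 n$. I will construct a local weight function $w'$ satisfying $R(w') = R(w)$, $Q(w') = Q(w)$, $A(w') \le c_0 n$, and $B(w') < B(w)$; the corresponding cost is then strictly less than $c_0$, contradicting its minimality.

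To locate a ple at which to shift weight, I would introduce $\phi(M) = b(M)n - a(M)\binom{n}{d}$, which is positive exactly on $B$-heavy ples. A direct computation yields
\[
\sum_M w(M)\phi(M) \;=\; n B(w) - \binom{n}{d} A(w) \;=\; \binom{n}{d}\bigl(c_0 n - A(w)\bigr) \;>\; 0,
\]
so some $M_1$ in the support of $w$ has $\phi(M_1) > 0$. By Lemmas~\ref{lem:decisive} and~\ref{lem:block-regular}, $M_1$ must be either the maximal-preserving extension $L$ or block-regular maximal-reversing; since $\phi(L) = 0$, it is the latter, and block-regularity combined with $B$-heaviness forces $a_1 \le n - d$ (the alternative $a_1 = n$ gives $\phi(M_1) = n(b_1 - \binom{n}{d}) \le 0$).

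Next I would perform the shift. Take $M_2 = L$ with parameters $(n, \binom{n}{d}, 0, n\binom{n}{d})$, and let $M_3$ be the block-length-one maximal-reversing ple with parameters $(n-d, \binom{n-1}{d}, (n-d)\binom{n-1}{d}, 0)$. For small $\epsilon > 0$, define
\[
w'_1 = w_1 - \epsilon, \quad w'_2 = w_2 + \frac{\epsilon q_1}{n\binom{n}{d}}, \quad w'_3 = w_3 + \frac{\epsilon r_1}{(n-d)\binom{n-1}{d}},
\]
and leave all other values of $w$ unchanged. By design, $R(w') = R(w)$ and $Q(w') = Q(w)$ exactly, and small $\epsilon$ preserves $A(w') \le c_0 n$ by the slack in $A$. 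Using $r_1 + q_1 = a_1 b_1$, the change in $B$ collapses to
\[
B(w') - B(w) \;=\; \frac{\epsilon}{n(n-d)}\,\bigl[nb_1(a_1 - n + d) - q_1 d\bigr],
\]
which is non-positive since $a_1 \le n-d$, and strictly negative unless both $a_1 = n-d$ and $q_1 = 0$.

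The hard part is the remaining degenerate case. Here $q_1 = 0$ forces $s_1 = 1$, so Lemma~\ref{lem:completely-regular} gives $b_1 = \binom{n-a_1}{d} = 1$; then $\phi(M_1) = n - (n-d)\binom{n}{d} < 0$ for $n$ large, contradicting $B$-heaviness. This eliminates the degenerate case and yields $B(w') < B(w)$ strictly, producing the desired contradiction. The symmetric situation where $B$ is not tight but $A$ is tight is handled by the dual shift toward the same pair of ples: one finds an $A$-heavy block-regular $M_1$ (for which $b_1 \le \binom{n-1}{d}$ is automatic, since $b_1 = \binom{n}{d}$ makes $\phi(M_1) \ge 0$), and a parallel computation shows the change in $A$ is strictly negative, with the same structural identity used to exclude the one boundary ple $(s_1 = 1, a_1 = 1, b_1 = \binom{n-1}{d})$, which turns out to be balanced rather than $A$-heavy.
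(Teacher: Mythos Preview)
Your proposed $M_3$ does not exist. You describe it as ``the block-length-one maximal-reversing ple with parameters $(n-d,\binom{n-1}{d},(n-d)\binom{n-1}{d},0)$,'' i.e.\ a ple of the form $[B_1<A_1]$ with $|A_1|=n-d$ and $|B_1|=\binom{n-1}{d}$. But in any ple with block structure $[B_1<A_1]$, every element of $A_1$ lies above every set in $B_1$, so each $S\in B_1$ must avoid all $n-d$ elements of $A_1$; hence $S$ is a $d$-subset of the remaining $d$ elements, forcing $|B_1|\le 1$. There is simply no ple with $a=n-d$, $q=0$, and $b=\binom{n-1}{d}$, so the weight shift you compute cannot be carried out. (Your displayed formula for $B(w')-B(w)$ is arithmetically correct \emph{given} those parameters, which is why the error is easy to miss.)

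The underlying idea---split the weight on $M_1$ into a pure-$q$ part sent to $L$ and a pure-$r$ part sent to some $q=0$ ple, using the slack in $A$ to absorb any increase there---is sound and can be repaired. For instance, taking $M_3=[B_1<A_1]$ with $|A_1|=a_1$ and $B_1$ equal to all $\binom{n-a_1}{d}$ sets avoiding $A_1$ gives $B(w')-B(w)=\epsilon\,q_1\bigl(\tfrac1n-\tfrac1{a_1}\bigr)<0$ whenever $a_1<n$ and $q_1>0$. The paper's own proof takes a rather different route: it works in the dual situation (assuming $B$ is not tight), first uses a non-block-regular neighbour $M(b_1+1,a_1)$ to force every positively weighted block-regular ple to have $b\ge\binom{n-1}{d}$, and then performs a three-way shift among $M(\binom{n-1}{d},n)$, $M(\binom{n-1}{d},n-d)$ and $M(\binom{n}{d},n)$.

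A smaller slip: in your sketch of the symmetric case you assert that the boundary ple with $a_1=1$, $b_1=\binom{n-1}{d}$ is balanced. In fact $\phi$ of that ple equals $n\binom{n-1}{d}-\binom{n}{d}=(n-d-1)\binom{n}{d}>0$, so it is $B$-heavy, not balanced. This happens not to damage your conclusion there, but the stated reason is incorrect.
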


\begin{proof}
We argue by contradiction and show that constraint~$B$ must be tight.  
The argument for constraint~$A$ is symmetric.  Since
constraint~$B$ is not tight, constraint~$A$ must be tight by Proposition~\ref{pro:two-tight}.  Accordingly,
there is a positive real number $\delta$ so that $A(w)=nc_0$ and
$B(w)=\binom{n}{d}c_0-\delta$.  In turn, this implies that
$w$ assigns positive weight to at least one $\ple$ which
is block-regular and $A$-heavy. 

Now suppose that $w$ assigns positive weight to a 
block-regular $\ple$ $M_1=M(b,a)$ with
$a_1<\binom{n-1}{d}$.  Let $M_2$ be the 
maximal-reversing (but not block-regular) $\ple$ $M(b_1+1,a_1)$.
We note that $r_2>r_1$ and $q_2>q_1$.

Let $\epsilon_1=\min\{w_1,\delta/2\}$.  Then set
$\epsilon_2=\max\{\epsilon_1r_1/r_2, \epsilon_1q_1/q_2\}$.
Then make the following two changes to $w$:  Set
$w'_1=w_1-\epsilon_1$ and $w'_2=w_2+\epsilon_2$.
Note first that $R(w')=R(w)-\epsilon_1 r_1+\epsilon_2r_2\ge R(w)$.
Similarly, $Q(w')\ge Q(w)$.  On the other hand, since $\epsilon_2<
\epsilon_1$, we have:
\[
A(w')=A(w)-\epsilon_1 a_1+\epsilon_2a_2 
=A(w)-\epsilon_1 a_1+\epsilon_2a_1
<A(w)
=nc_0.\]
Also, we have: 
\begin{align*}
B(w')&=B(w)-\epsilon_1 b_1+\epsilon_2 b_2\\
     &=B(w)-\epsilon_1 b_1+\epsilon_2 (b_1+1)\\
     &\le B(w)-\epsilon_1 b_1+\epsilon_1(b_1+1)\\
     &=B(w)+\epsilon_1\\
     &\le B(w)+\delta/2\\
     &<\binom{n}{d}c_0.
\end{align*}
It follows that $w'\in\cgW_0$ but neither constraint $A$ nor
constraint~$B$ is tight, contradicting Proposition~\ref{pro:two-tight}.  This forces 
$b(M)\ge\binom{n-1}{d}$ for every block-regular $\ple$ $M$ 
assigned positive weight by $w$.

  It follows that
$w$ assigns positive weight to the block-regular $\ple$
$M_1=M(\binom{n-1}{d},n)$.
Let $M_2=M(\binom{n-1}{d},n-d)$  and $M_3=M(\binom{n}{d},n)$.
We note that $r_1=r_2=r_3$ and $q_2<q_1<q_3$.

Let $\epsilon_1=\min\{w_1,\delta/(2\binom{n}{d})\}$.  Then
set $\epsilon_2=\epsilon_1(q_3-q_1)/(q_3-q_2)$ and
$\epsilon_3=\epsilon_1(q_1-q_2)/(q_3-q_2)$.  Form
$w'$ by making the following three changes. Set
$w'_1=w_1-\epsilon_1$, $w'_2=w_2+\epsilon_2$ and
$w'_3=w_3+\epsilon_3$.

Since $\epsilon_1=\epsilon_2+\epsilon_3$, we have
$R(w')=R(w)$.  An easy calculation shows that we
also have $Q(w')=Q(w)$.  For constraint~$A$, we have:
\[A(w')=A(w)-\epsilon_1 n+\epsilon_2(n-d)+\epsilon_3 n
<A(w)-\epsilon_1 n+\epsilon_2 n+\epsilon_3 n
=A(w)
= nc_0.\]
For constraint~$B$, we have:
\begin{align*}
B(w')&=B(w)-\epsilon_1\binom{n-1}{d}+\epsilon_2\binom{n-1}{d}+\epsilon_3\binom{n}{d}\\
     &=B(w)+\epsilon_3\left(\binom{n}{d}-\binom{n-1}{d}\right)\\
     &<B(w)+\epsilon_1\binom{n}{d}\\
     &\le B(w)+\delta/2\\
     &<\binom{n}{d}c_0.
\end{align*}
Again, these calculations show that $w'\in\cgW_0$ but neither constraint~$A$
nor constraint~$B$ is tight.  The contradiction completes the proof.
\end{proof}

We close this subsection with a two useful observations about
maximal preserving $\ple$'s.  The elementary arguments
are left as exercises.

\begin{proposition}\label{pro:max-q/r}
Fix an integer $a$ with $1\le a\le n$.  Then
the ratio $q(b,a)/r(b,a)$ is strictly increasing
in $a$.  Furthermore, over all pairs $(b,a)$, the
maximum value of $q(b,a)/r(b,a)$ is $d$ and
this occurs if and only if $a=n$ and $b=\binom{n}{d}$.
\end{proposition}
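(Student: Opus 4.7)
The plan is to exploit the combinatorial rigidity of the maximal-reversing $\ple$ $M(b,a)$ afforded by Lemma~\ref{lem:r+1}: Properties~4 and~5 fix $|A_0|$ and $|B_{s+1}|$, Property~2 forces $|A_i|=1$ whenever $B_{i+1}\neq\emptyset$, and Property~3 (interpreted by assigning each max-set to the earliest compatible block, the convention needed to realize the maximum value of $r$) then recursively determines the remaining $|B_i|$. With these ingredients in hand I would obtain closed-form expressions
\[
  r(b,a) \;=\; \sum_{i=1}^{s}|A_i|\sum_{j=1}^{i}|B_j|
  \qquad\text{and}\qquad
  q(b,a) \;=\; ab-r(b,a),
\]
which reduce the proposition to a purely algebraic question about these formulas.

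For the monotonicity assertion (read, given the statement's apparent typo, as $q(b,a)/r(b,a)$ strictly increasing along the chain of admissible pairs from $(1,1)$ up to $(\binom{n}{d},n)$), I would compare the block structures of $M(b,a)$ and its immediate successor along this chain. Incrementing $a$ by one inserts a new min, which (depending on the current profile) either enlarges $|A_0|$ or appends a new singleton block $A_{s+1}$; the rigidity of Properties~3 and~4 then forces a predictable redistribution of max-sets among the $B_i$'s. The resulting marginal changes $\Delta q$ and $\Delta r$ are explicit, and the key inequality $\Delta q/\Delta r > q(b,a)/r(b,a)$ reduces via cross-multiplication to a binomial inequality expressing the fact that $|B_{s+1}|$ dominates the partial sums $|B_1|+\cdots+|B_i|$ for $i\le s$. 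A dual argument handles the increment in $b$, so the ratio is strictly monotone along both coordinate directions.

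For the extremum at $(b,a)=(\binom{n}{d},n)$, the forced profile is $s=n-d$, $|A_0|=d$, $|A_i|=1$ for $1\le i\le n-d$, together with $|B_{s+1}|=\binom{n-1}{d-1}$ and $|B_i|$ falling into the hockey-stick pattern for $1\le i\le s$. Two applications of $\sum_{k=t}^{m}\binom{k}{t}=\binom{m+1}{t+1}$ evaluate $r=\binom{n}{d+1}$ and thus $q=n\binom{n}{d}-\binom{n}{d+1}$; combining these with the Pascal relation $(d+1)\binom{n}{d+1}=(n-d)\binom{n}{d}$ and simplifying delivers the claimed value $d$ for $q/r$. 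I expect the main obstacle to be the careful accounting in Property~3 when neighboring conditions admit the same max-set: the "earliest admissible block" convention must be applied uniformly across the whole chain of comparisons for the $|B_i|$ formulas, and hence the hockey-stick identities, to apply with the correct indices. Handling the degenerate boundary cases (when $|A_0|=0$ or $|B_{s+1}|=0$, so Property~2 no longer pins down $|A_s|$) will require a separate, slightly looser monotonicity comparison, but one that still suffices to identify $(\binom{n}{d},n)$ as the unique maximizer.
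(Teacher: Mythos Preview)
The paper leaves this proposition as an exercise and supplies no proof, so there is nothing to compare your approach against; your plan of reading off the block profile from Lemma~\ref{lem:r+1} and evaluating $r$ and $q$ explicitly is the natural one. However, your final arithmetic is wrong, and the error exposes a defect in the proposition itself. At $(b,a)=\bigl(\binom{n}{d},n\bigr)$ your hockey-stick computation $r=\binom{n}{d+1}$ is correct, whence $q=n\binom{n}{d}-\binom{n}{d+1}$; the relation $(d+1)\binom{n}{d+1}=(n-d)\binom{n}{d}$ then gives
\[
\frac{q}{r}\;=\;\frac{n\binom{n}{d}}{\binom{n}{d+1}}-1\;=\;\frac{n(d+1)}{n-d}-1\;=\;\frac{d(n+1)}{n-d},
\]
which is \emph{not} $d$ (for $d=2$, $n=4$ one checks directly that $r=4$, $q=20$, $q/r=5$). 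So the ``simplification delivering the claimed value $d$'' does not go through, and in fact the discrete assertion that the maximum of $q(b,a)/r(b,a)$ equals $d$ is simply false. The proposition---whose first sentence is already garbled, as you noticed---is evidently a loose rendering of the \emph{continuous} fact actually invoked in Lemma~\ref{lem:no-far-out}: on the diagonal $x=y$ one has $q(x)/r(x)<d$ for $\beta\le x<1$, with value $d$ attained only at $x=1$. That is what you should prove, and it follows from~\eqref{eqn:r(x,y)} by checking $(d+1)r(x)>x^2$ on $[\beta,1)$, since $q(x)/r(x)<d$ is equivalent to $x^2<(d+1)r(x)$.

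Your monotonicity sketch is also too thin to stand as written: the reduction of $\Delta q/\Delta r>q/r$ to ``$|B_{s+1}|$ dominates the partial sums $|B_1|+\cdots+|B_i|$'' is asserted rather than derived, and in the entire regime $b\le\binom{n-1}{d}$ Property~4 forces $|B_{s+1}|=0$, so the inequality as you phrase it cannot be the operative one there. If the intended target is the continuous statement, it is cleaner to bypass the discrete increments altogether and differentiate $q(x)/r(x)=x^2/r(x)-1$ directly from~\eqref{eqn:r(x,y)}.
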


\subsection{Transitioning to the Continuous Setting}

In the next part of the discussion, we will keep the
value of $d$ fixed, but we will transition from the
discrete to continuous settings by analyzing the behavior of
solutions as $n\rightarrow\infty$. Since our emphasis is on 
combinatorics and not analysis, we proceed in an informal manner,
but it should be clear to the reader how our arguments can be 
recast in a completely formal setting.    

Fix a block-regular $\ple$ $M=M(b,a)$ with $1\le a\le n-d$ and $b\le
\binom{n-1}{d}$.
With $M$ fixed for the moment, we abbreviate the quantities in
$(a(M), b(M), r(M),q(M),s(M))$  as $(a,b,r,q,s)$, respectively.
We also set $m=|A_s|$, so that $b=\binom{n-m}{d}$.

We then have the following \textit{exact} formula for $r=r(M)$:
\begin{align}\label{eqn:exact}
\begin{split}
 r&=\binom{n-a}{d}+\binom{n-a+1}{d}+\dots+\binom{n-m-1}{d}+m\binom{n-m}{d} \\
  &=m\binom{n-m}{d}+\binom{n-m}{d+1}-\binom{n-a}{d+1}\\
  &=\frac{1}{d+1}\Bigl[(n+md-d)\binom{n-m}{d}-(n-a-d)\binom{n-a}{d}\Bigr]
  \end{split}.
\end{align}

We define quantities $x$ and $y$ by setting
$x=b/\binom{n}{d}$ and $y = a/n$.  Since $b=\binom{n-m}{d}$, it
follows that (in the limit) $x=(1-m/n)^d$.  Since $a\ge m$, it 
follows that we have the following restrictions on the pair $(x,y)$:
\begin{equation}\label{eqn:x-y}
 0\le x,y\le 1  \qquad \qquad y \ge 1-x^{1/d}.
\end{equation}

Previously, we analyzed the quantity $r(b,a)$.  Now we study
the ratio $r(x,y)$ which we define by setting:
\[
r(x,y)=\frac{r(b,a)}{n\binom{n}{d}}.
\]

We then have the following formula for $r(x,y)$:
\begin{equation}\label{eqn:r(x,y)}
r(x,y)=\frac{1}{d+1}\Bigl[(d+1)x - dx^{\frac{d+1}{d}} 
  - (1-y)^{d+1}\Bigr].
\end{equation}

It is not necessary to develop a parallel formula for $q(x,y)$ since
$q(x,y)=xy-r(x,y)$.  In the continuous setting, the linear
programming problem $\LP$ becomes:

\smallskip
Minimize the quantity $c$ subject to the following constraints:

\begin{align*}\label{eqn:constraints-LP} 
  A(w)&=\sum_{M\in\ple(P)}y(M)\cdot w(M)\le c.\\  
  B(w)&=\sum_{M\in\ple(P)}x(M)\cdot w(M)\le c.\\ 
  R(w)&=\sum_{M\in\ple(P)}r(M)\cdot w(M)\ge 1.\\
  Q(w)&=\sum_{M\in\ple(P)}q(M)\cdot w(M)\ge 1. 
\end{align*} 

We continue to refer to these inequalities as
constraints~$A$, $B$, $R$ and~$Q$, respectively. 
 
\subsection{Ratio Curves}

Let $\rho$ be a non-negative number.  We call the set
of all pairs $(x,y)$ such that $q(x,y)/r(x,y)=\rho$ the 
$\rho$-\textit{ratio curve}.  One special case is $\rho=0$ and we 
note that the $0$-ratio curve is just the set of pairs $(x,y)$ with
$y=1-x^{1/d}$.

\begin{lemma}\label{lem:ratio-curve}
For each $\rho\in\reals_0$, the pairs $(x,y)$ on the $\rho$-ratio
curve determine a decreasing function of $x$.
\end{lemma}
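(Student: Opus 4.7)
My plan is to recognize the $\rho$-ratio curve as a level set of a single scalar function and then invoke implicit differentiation. Since $r(x,y)+q(x,y)=xy$ by definition, the condition $q/r=\rho$ is equivalent to $(1+\rho)\,r(x,y)=xy$, i.e.\ to the level set $\{g(x,y)=1/(1+\rho)\}$ of the function $g(x,y):=r(x,y)/(xy)$. It will suffice to show that both $g_x$ and $g_y$ are strictly negative on the interior of the feasible region $D=\{(x,y): 0<x\le 1,\; 1-x^{1/d}\le y\le 1\}$; the implicit function theorem then yields $dy/dx=-g_x/g_y<0$ along each level curve, which is the desired conclusion.

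From~\eqref{eqn:r(x,y)} we compute $r_x=1-x^{1/d}$ and $r_y=(1-y)^d$, so $g_x=(xr_x-r)/(x^2y)$ and $g_y=(yr_y-r)/(xy^2)$, and the whole question reduces to the signs of $xr_x-r$ and $yr_y-r$. The algebra becomes transparent under the substitution $u=x^{1/d}$ and $v=1-y$, in which the domain constraint $y\ge 1-x^{1/d}$ becomes the clean inequality $u\ge v$ (strict on the interior). A direct calculation gives
\begin{equation*}
  xr_x-r=\frac{1}{d+1}\bigl(v^{d+1}-u^{d+1}\bigr),
  \qquad
  yr_y-r=\frac{1}{d+1}\bigl(h(v)-h(u)\bigr),
\end{equation*}
where $h(t):=(d+1)t^d-dt^{d+1}$. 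The first quantity is obviously negative when $v<u$. For the second, one checks $h'(t)=d(d+1)t^{d-1}(1-t)\ge 0$ on $[0,1]$, with strict inequality on $(0,1)$, so $h$ is strictly increasing on $[0,1]$ and hence $h(v)<h(u)$ as well. Thus $g_x<0$ and $g_y<0$ throughout the interior of $D$, handling every $\rho>0$.

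The remaining case $\rho=0$ requires a separate (trivial) remark: substituting $y=1-x^{1/d}$ into~\eqref{eqn:r(x,y)} gives $r=x(1-x^{1/d})=xy$, so $q=xy-r=0$ precisely on the boundary curve $y=1-x^{1/d}$, which is already manifestly decreasing in $x$. The only genuine obstacle in the argument is spotting the auxiliary polynomial $h$ and observing its monotonicity on $[0,1]$; once the $(u,v)$-change of variables is made, this is a one-line calculus check, and the rest of the proof is bookkeeping.
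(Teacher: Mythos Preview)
Your proof is correct and follows essentially the same route as the paper: both rewrite the $\rho$-ratio condition as a level set of $xy/r(x,y)$ (equivalently, of your $g=r/(xy)$), apply implicit differentiation, and reduce the sign of $dy/dx$ to the two inequalities $r>x(1-x^{1/d})$ and $r>y(1-y)^d$. The only real difference is in how those two inequalities are justified: the paper appeals informally to the discrete picture (these are the statements $r>bm$ and $r>ab_1$, which are obvious from the block structure), whereas you give a self-contained algebraic verification via the substitution $u=x^{1/d}$, $v=1-y$ and the monotonicity of $h(t)=(d+1)t^d-dt^{d+1}$ on $[0,1]$. Your argument is a bit more explicit and stands on its own without reference to the combinatorics, at the cost of a short extra computation; the paper's version is shorter but leans on intuition from the discrete setting.
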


\begin{proof}
The equation $q(x,y)/r(x,y)=\rho$ is equivalent to $xy/r(x,y)=1+\rho$.
Using implicit differentiation and abbreviating $r(x,y)$ as
$r$, we obtain the following formula for the derivative $\frac{dy}{dx}$ at the
point $(x,y)$:
\[
\frac{dy}{dx}=-\frac{y[r-x(1-x^{1/d})]}{x[r-y(1-y)^d]}.
\]
In this quotient, when $0<x,y<1$, the numerator 
is positive since $r>x(1-x^{1/d})$.  Note that, in the discrete
setting, this is just saying that $r>bm$.  Similarly, the denominator
is positive since $r>y(1-y)^d$.  In the discrete setting, if $b_1$
is the size of the bottom block of maximal elements, this inequality
is simply the statement that $r>ab_1$.  Therefore $\frac{dy}{dx}<0$ and
the pairs on the $\rho$-ratio curve form a decreasing function in $x$. 
\end{proof}

\begin{lemma}
Let $\rho$ be a fixed non-negative number and consider the set
of all pairs $(x,y)$ on the $\rho$-ratio curve with $x,y>0$.  Among these
pairs, the quantity $r(x,y)$ is maximized and the
quantity $(x+y)/r(x,y)$ is minimized when $x=y$.
\end{lemma}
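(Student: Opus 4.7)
The plan is to reduce both extremization problems to tractable critical-point calculations on a one-dimensional curve. The key observation is that along the $\rho$-ratio curve, $r(x,y) = xy/(1+\rho)$, so maximizing $r$ is equivalent to maximizing $xy$, and minimizing $(x+y)/r(x,y) = (1+\rho)(1/x + 1/y)$ is equivalent to minimizing $1/x + 1/y$.

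First I would parameterize the curve as $y = y(x)$ using Lemma~\ref{lem:ratio-curve}, which guarantees that $y$ is a (strictly) decreasing function of $x$. Setting $\frac{d(xy)}{dx} = y + x\,y'(x) = 0$ gives $y'(x) = -y/x$ at a critical point. Substituting the explicit formula for $y'(x)$ derived in the proof of Lemma~\ref{lem:ratio-curve} and canceling the common factor $r$ in numerator and denominator, the critical condition simplifies to
\[
x(1-x^{1/d}) = y(1-y)^d,
\]
which is the symmetric statement that $x\,r_x = y\,r_y$. For the second quantity, an analogous differentiation of $1/x + 1/y$ along the curve, combined with the relation $xy = (1+\rho)r$, reduces (after similar algebra) to the same symmetric form.

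The hard part will be showing that this critical condition, together with the curve equation, forces $x = y$. My plan is to study the two auxiliary functions $\phi(t) = t(1-t^{1/d})$ and $\psi(t) = t(1-t)^d$. Each is unimodal on $[0,1]$ with a unique interior maximum, and one checks using the defining relation $(1-\beta)^d = \beta$ (equivalently $\beta^{1/d} = 1-\beta$) that $\phi$ and $\psi$ agree at $t = \beta$. Using the monotonicity of $y(x)$ along the curve, together with the explicit constraint $xy = (1+\rho)r(x,y)$, I would argue that the simultaneous equations $\phi(x) = \psi(y)$ and $xy=(1+\rho)r(x,y)$ admit a solution with $x \ne y$ in the admissible region $y \ge 1 - x^{1/d}$, $x,y>0$ only at the diagonal crossing, so the critical point lies on $x=y$.

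Finally, I would confirm that the interior critical point is indeed a global maximum of $r$ and a global minimum of $(x+y)/r$ by examining endpoint behavior on the curve in the open first quadrant: as $(x,y)$ approaches the endpoints (where the curve meets the admissibility boundary $y = 1 - x^{1/d}$ or an axis), we have $r \to 0$ and hence $(x+y)/r \to \infty$, ruling out boundary extrema and identifying the unique diagonal critical point as the global optimizer of both quantities.
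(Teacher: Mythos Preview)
Your reductions $r=xy/(1+\rho)$ and $(x+y)/r=(1+\rho)(1/x+1/y)$ along the curve are correct and give a cleaner entry than the paper's route; the paper instead differentiates $r$ directly via the chain rule, writing $dr/dx=(1-x^{1/d})+(1-y)^d\,dy/dx$, and then asserts without details that this has the sign of $y-x$. Both computations lead to the same stationarity condition $x(1-x^{1/d})=y(1-y)^d$ for the first problem.

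The genuine gap is the step where you conclude this forces $x=y$. Substituting $y=x$ into the condition gives $1-x^{1/d}=(1-x)^d$, which is exactly the defining equation of $\beta$ and holds at no other interior point. For $\rho>0$ the diagonal crossing of the $\rho$-curve occurs at some $x_0>\beta$, and there $1-x_0^{1/d}\neq(1-x_0)^d$; so the diagonal is \emph{not} a stationary point of $xy$ (equivalently of $r$) along the curve. A direct check for $d=2$, $\rho=1$ (diagonal at $x_0=x_{\bal}\approx 0.8$) gives $dr/dx>0$ there, and $r$ continues to increase on the side $x>y$. Your plan to show that $\phi(x)=\psi(y)$ together with the curve equation admits only the diagonal solution therefore cannot succeed. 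Two further issues: the second problem yields $y'=-y^2/x^2$, not $y'=-y/x$, so it does not reduce to ``the same symmetric form''; and your endpoint claim fails because for $\rho>0$ the curve terminates at $x=1$ or $y=1$ with $r$ bounded away from $0$. Note that the paper's own ``straightforward to verify'' assertion runs into the same obstruction, so this lemma's sketch is inadequate in the paper as well.
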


\begin{proof}
We outline the proof for the first statement.  The argument for the second
is similar.  On the $\rho$-ratio curve, $y$ is a function of $x$, and
we have a formula for $\frac{dy}{dx}$ 
from the proof of the preceding lemma.  It follows that
on this ratio curve, $r(x,y)$ is a function of $x$.  Again, using 
implicit differentiation, we have:
\[
\frac{dr}{dx}=  1-x^{1/d}+(1-y)^d \frac{dy}{dx}.
\]
From this expression, it is straightforward to verify that
$\frac{dr}{dx}$ is positive when $x<y$, $0$ when $x=y$ and negative when 
$x>y$.  Therefore, on the $\rho$-ratio curve, $r(x,y)$ is maximum
when $x=y$.
\end{proof}

\begin{lemma}\label{lem:balanced}
If $M=M(x,y)$, $w\in\cgW_0$ and $w(M)>0$, then $M$ is balanced.
\end{lemma}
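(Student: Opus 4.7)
I argue by contradiction via a two-ple exchange, obtaining a direct contradiction with the optimality of $c_0$.

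Assume $M_1=M(x_1,y_1)$ is a block-regular maximal-reversing ple with $w_1>0$ that is not balanced. Without loss of generality $M_1$ is $A$-heavy, so $x_1<y_1$. By Lemma~\ref{lem:A-and-B-tight} both constraints $A$ and $B$ are tight, and since every balanced ple and every maximal-preserving linear extension contributes equally to $A$ and $B$, the positive imbalance $w_1(y_1-x_1)>0$ must be cancelled by some $B$-heavy ple $M_3=M(x_3,y_3)$ with $w_3>0$ and $x_3>y_3$.

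For each $i\in\{1,3\}$, let $M_i^{*}=M(x_i^{*},x_i^{*})$ be the balanced ple on the $\rho_i$-ratio curve through $M_i$, where $\rho_i=q_i/r_i$. Lemma~\ref{lem:ratio-curve} and the preceding lemma give $r_i^{*}>r_i$, $q_i^{*}=\rho_i r_i^{*}>q_i$, and $(x_i^{*}+y_i^{*})/r_i^{*}<(x_i+y_i)/r_i$, while monotonicity along the ratio curve places the balanced point strictly between $M_1$ and $M_3$: $x_1<x_1^{*}=y_1^{*}<y_1$ and $y_3<y_3^{*}=x_3^{*}<x_3$.

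Now perform the shift: for each $i\in\{1,3\}$, decrease $w(M_i)$ by $\epsilon_i$ and increase $w(M_i^{*})$ by $\epsilon_i r_i/r_i^{*}$. Because $q_i^{*}/r_i^{*}=\rho_i=q_i/r_i$, this shift preserves both $R$ and $Q$ exactly. Writing $\Delta_i x=(r_i/r_i^{*})x_i^{*}-x_i$ and $\Delta_i y=(r_i/r_i^{*})y_i^{*}-y_i$, the preceding lemma immediately yields $\Delta_i x+\Delta_i y<0$ for each $i$.

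A direct sign analysis shows $\Delta_1 y<0$ (both $r_1<r_1^{*}$ and $y_1^{*}<y_1$) and symmetrically $\Delta_3 x<0$. The remaining terms $\Delta_1 x$ and $\Delta_3 y$ may be positive, but the strict inequality $\Delta_i x+\Delta_i y<0$ forces $\Delta_1 x<|\Delta_1 y|$ and $\Delta_3 y<|\Delta_3 x|$, whence $\Delta_1 x\cdot\Delta_3 y<|\Delta_1 y|\cdot|\Delta_3 x|$. This guarantees a nonempty open interval of ratios $\epsilon_3/\epsilon_1$ in which \emph{both} $\Delta A=\epsilon_1\Delta_1 y+\epsilon_3\Delta_3 y<0$ and $\Delta B=\epsilon_1\Delta_1 x+\epsilon_3\Delta_3 x<0$ simultaneously. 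Choosing $\epsilon_1,\epsilon_3>0$ in this interval, and small enough that $\epsilon_i\le w_i$, the new weight function $w'$ is feasible with $R(w')$ and $Q(w')$ unchanged but $A(w')<c_0 n$ and $B(w')<c_0\binom{n}{d}$. Hence the cost strictly drops below $c_0$, contradicting optimality. The main technical obstacle is the sign-and-magnitude analysis yielding $\Delta_1 x\cdot\Delta_3 y<|\Delta_1 y|\cdot|\Delta_3 x|$, which hinges essentially on the preceding lemma's assertion that $(x+y)/r$ is \emph{strictly} minimized at the balanced point on each ratio curve.
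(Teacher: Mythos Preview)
Your proof is correct and follows the paper's approach: shift weight from an $A$-heavy and a $B$-heavy ple to their balanced counterparts on the same ratio curves, preserving $R$ and $Q$ while forcing both $A$ and $B$ strictly below their tight values. You are in fact more careful than the paper, which simply removes all weight from $M_1$ and $M_2$ and asserts $A(w')<A(w)$ and $B(w')<B(w)$ without explaining why the two shifts---each of which only guarantees that the \emph{sum} $\Delta_i x+\Delta_i y$ is negative---combine to decrease both constraints individually; your choice-of-ratio argument supplies precisely this detail.

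One cosmetic point: the product inequality $\Delta_1 x\cdot\Delta_3 y<|\Delta_1 y|\cdot|\Delta_3 x|$ as written can fail when both $\Delta_1 x$ and $\Delta_3 y$ are negative (e.g.\ both large in absolute value), so it does not follow merely from $\Delta_1 x<|\Delta_1 y|$ and $\Delta_3 y<|\Delta_3 x|$. However, in that case every choice of $\epsilon_1,\epsilon_3>0$ already gives $\Delta A<0$ and $\Delta B<0$, so the product inequality is only needed---and your derivation of it is only valid---when $\Delta_1 x>0$ and $\Delta_3 y>0$. Splitting off that case explicitly would make the argument airtight; the conclusion is unaffected.
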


\begin{proof}
We argue by contradiction and suppose that $w$ assigns positive weight
to a $\ple$ which is not balanced. 
Since constraints $A$ and $B$ are tight, it follows that
there are $\ple$'s $M_1=N(x_1,y_1)$ and $M_2=N(x_1,y_2)$, both
with positive weight, so that $M_1$ is $B$-heavy and $M_2$ is $A$-heavy.

For $i\in \{1,2\}$, let $\rho_i=q_i/r_i$.  Let $M_3=M(x_3,y_3)$ 
be the point on the $\rho_1$-ratio curve where $x_3=y_3$.  From the
preceding lemma, we know that $r_3>r_1$. Also, we know
that $2x_3/r_3<(x_1+y_1)/r_1$.  Also, let $M_4=N(x_4,y_4)$ 
be the point on the $\rho_2$-ratio curve where $x_4=y_4$.  From the
preceding lemma, we know that $r_4>r_2$. Also, we know
that $2x_4/r_4<(x_2+y_2)/r_2$.

Let $w'$ be the local weight function obtained from $w$ by making
the following  changes: $w'_1=w'_2=0$, $w'_3= w_3+\epsilon_1 r_3/r_1$
and $w'_4=w_4+\epsilon_2 r_4/r_2$.  Then $R(w')=R(w)$, $Q(w')=Q(w)$,
$A(w')<A(w)$ and $B(w')<B(w)$.  This is a contradiction since
neither constraint~$A$ nor constraint~$B$ is tight.
\end{proof}

For the balance of the argument, since we will be concerned exclusively
with $\ple$'s on the balancing line (where $x=y$), we abbreviate $r(x,x)$ to
$r(x)$.  Similarly, $q(x,x)$ will be written as $q(x)$, and
the $\ple$ $M(x,x)$ will now be just $M(x)$.  Also, we will only be
concerned with constraints $R$, $Q$ and~$B$. 

Now for our fourth tie-breaker.

\begin{enumerate}[wide=0pt]
\item[{$\bTB\bf{(4)}$}.] Minimize the number $t_4$ of $\ple$'s in $\ple(P)$
which are assigned positive weight by $w$.
\end{enumerate} 

In view of the statement of our main theorem, and our outline
for the proof, it is clear that ultimately we will show that $t_4\le2$,

The following proposition consists of some relatively straightforward
calculus exercises.

\begin{proposition}\label{pro:delta-gamma}\hfill

\begin{enumerate}
\item The function $r(x)/x$, defined on the interval $(0,1]$, is 
concave downwards on the interval $(0,1]$, has a vertical asymptote at $x=0$ and achieves a local
maximum value at $x=\delta$ with $\beta<\delta<x_\bst$.
\item The function $q(x)/x$ is monotonically increasing on $[\beta,1]$.
\end{enumerate}
\end{proposition}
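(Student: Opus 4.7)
Both parts reduce to differentiating the explicit formula
\[
\frac{r(x)}{x} = 1 - \frac{d}{d+1}\,x^{1/d} - \frac{(1-x)^{d+1}}{(d+1)\,x}
\]
and checking sign conditions, using the defining identity $(1-\beta)^d = \beta$, equivalently $\beta^{1/d} = 1-\beta$, whenever we need to evaluate at the endpoint $x = \beta$.

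For part~(1), the vertical asymptote at $x=0$ is immediate since $r(0) = -1/(d+1) < 0$ forces $r(x)/x \to -\infty$ as $x \to 0^+$. For the concavity, I would compute $(r(x)/x)''$ piece by piece: the summand $-dx^{1/d}/(d+1)$ contributes $+\frac{d-1}{d(d+1)}\,x^{1/d-2}$ (convex), while $-(1-x)^{d+1}/((d+1)x)$ contributes a negative quantity (via the quotient rule), and one then checks on the relevant interval that the second contribution dominates the first, reducing to a polynomial inequality in $x$. For the location of the local max, starting from
\[
\left(\frac{r(x)}{x}\right)' = -\frac{1}{d+1}\,x^{1/d-1} + \frac{(1-x)^d(1+dx)}{(d+1)\,x^2},
\]
a direct substitution using $\beta^{1/d-1} = (1-\beta)/\beta$ and $(1-\beta)^d = \beta$ gives $(r/x)'(\beta) = 1 > 0$. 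By concavity, $(r/x)'$ is strictly decreasing, so it has a unique zero $\delta > \beta$, where $r/x$ attains its maximum by the first-derivative test. To show $\delta < x_\bst$, write out the first-order condition $g'(x_\bst) = 0$ for $g(x) = ((d+1)r(x) - x)/((d+1)r(x) - x^2)$; after algebraic manipulation this condition forces $(r/x)'(x_\bst) < 0$, hence $\delta < x_\bst$ by monotonicity of $(r/x)'$.

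For part~(2), since $q(x) = x^2 - r(x)$ on the balancing line, $q(x)/x = x - r(x)/x$ and hence $(q(x)/x)' = 1 - (r(x)/x)'$. The cleanest proof, avoiding any direct appeal to concavity, is to clear denominators: $(q/x)'(x) \geq 0$ is equivalent to
\[
N(x) := (d+1)x^2 + x^{(d+1)/d} - (1-x)^d(1+dx) \geq 0.
\]
Direct computation using $\beta^{1/d} = 1-\beta$ gives $N(\beta) = 0$, and
\[
N'(x) = (d+1)\Bigl[\,2x + \tfrac{1}{d}x^{1/d} + dx(1-x)^{d-1}\,\Bigr] > 0 \quad \text{for } x \in (0,1),
\]
so $N$ is strictly increasing on $(0,1)$ and hence nonnegative on $[\beta, 1]$ with equality only at $\beta$.

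The main obstacle is the concavity proof in part~(1): the convex summand $-dx^{1/d}/(d+1)$ and the concave summand $-(1-x)^{d+1}/((d+1)x)$ push in opposite directions, and verifying that the latter's second-derivative contribution dominates on the interval requires careful bookkeeping and reduces to a polynomial sign check. Everything else is a direct calculation once the key algebraic identities at $\beta$ are in hand.
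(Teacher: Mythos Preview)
The paper gives no proof of this proposition, declaring it a collection of ``relatively straightforward calculus exercises,'' so there is nothing to compare against directly. Your treatment of part~(2) is complete and correct: the reduction to $N(x)\ge 0$, the evaluation $N(\beta)=0$ via $\beta^{1/d}=1-\beta$, and the computation of $N'(x)>0$ all check out line by line.

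For part~(1) there is one genuine issue. The concavity assertion, taken literally on all of $(0,1]$, is false: your own decomposition shows the convex summand contributes $\frac{d-1}{d(d+1)}x^{1/d-2}$ to the second derivative, while the concave summand's contribution carries a factor $(1-x)^{d-1}$ and hence vanishes at $x=1$. Thus $(r/x)''(1)=\frac{d-1}{d(d+1)}>0$ for every $d\ge 2$, so $r(x)/x$ is convex near $x=1$. Your plan to deduce uniqueness of the critical point $\delta$ \emph{from} global concavity therefore cannot succeed as written, and the ``polynomial sign check'' you anticipate will in fact fail near $x=1$.

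Fortunately nothing downstream in the paper needs full concavity; the subsequent lemmas only use that $r(x)/x$ has a single local maximum at some $\delta\in(\beta,1)$. This follows cleanly from your own derivative formula: $(r/x)'(x)=0$ is equivalent to $x^{(d+1)/d}=(1-x)^d(1+dx)$, and on $(0,1)$ the left side is strictly increasing while the right side has derivative $-d(d+1)x(1-x)^{d-1}<0$, hence is strictly decreasing. So the critical equation has exactly one solution $\delta$, with $(r/x)'>0$ on $(0,\delta)$ and $(r/x)'<0$ on $(\delta,1)$. Combined with your correct computations $(r/x)'(\beta)=1>0$ and the sign analysis of $g'(x_{\bst})=0$, this gives $\beta<\delta<x_{\bst}$ without any appeal to concavity. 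Replace the concavity step by this monotonicity argument and your proof goes through.
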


Since $q(x)/x$ and $r(x)/x$ are increasing on the closed interval
$[\beta,\delta]$, the following result is a
straightforward application of Proposition~\ref{pro:4-way} in 
the continuous setting.

\begin{lemma}\label{lem:below-delta}
There is no local weight function $w\in\cgW_0$ which
assigns positive weight to a $\ple$ $M=M(x)$ with
$\beta\le x<\delta$.
\end{lemma}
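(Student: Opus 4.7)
The plan is to argue by contradiction using the continuous analog of Proposition~\ref{pro:4-way}, exploiting the fact that on the interval $[\beta,\delta]$ both $r(x)/x$ and $q(x)/x$ are non-decreasing, with the former being \emph{strictly} so. Suppose some $w\in\cgW_0$ assigns positive weight to a $\ple$ $M_1=M(x_1)$ with $\beta\le x_1<\delta$. By Lemma~\ref{lem:balanced} I may assume that every $\ple$ in the support of $w$ lies on the balancing line $x=y$, so the four ratios $r/a$, $r/b$, $q/a$, $q/b$ that appear in Proposition~\ref{pro:4-way} each collapse (in the continuous limit) to one of $r(x)/x$ and $q(x)/x$; the four ratio inequalities in the hypothesis of that proposition therefore reduce to the single pair $r(x_1)/x_1\le r(x_2)/x_2$ and $q(x_1)/x_1\le q(x_2)/x_2$.

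Next I would pick any $x_2$ with $x_1<x_2\le\delta$ and compare $M_1$ with $M_2=M(x_2)$. Part~(1) of Proposition~\ref{pro:delta-gamma} says $r(x)/x$ is concave downward on $(0,1]$ with local maximum at $\delta$, so its derivative is strictly positive on $[\beta,\delta)$; in particular $r(x_1)/x_1<r(x_2)/x_2$ strictly. Part~(2) gives $q(x_1)/x_1\le q(x_2)/x_2$, since $q(x)/x$ is monotonically increasing on $[\beta,1]$. Hence the hypotheses of the continuous version of Proposition~\ref{pro:4-way} are met for the pair $(M_1,M_2)$, with the required strict inequality supplied by $r$. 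Executing the weight-shift of Proposition~\ref{pro:4-way} in this setting (set $w'_1=0$ and move the freed weight onto $M_2$ scaled by an $\epsilon$ chosen in the non-empty interval $[\,r(x_1)/r(x_2),\,x_1/x_2\,]$) produces an optimum weight function that strictly improves the quantity $R(w)+Q(w)-A(w)-B(w)$, contradicting the tie-breaker $\TB(1)$ and hence $w\in\cgW_0$.

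The step I expect to require the most care is not the combinatorial comparison, which is essentially immediate, but rather verifying that concavity of $r(x)/x$ together with a local maximum at $\delta$ really forces the derivative to be \emph{strictly} positive on $[\beta,\delta)$, not merely non-negative; this strictness is what furnishes the strict inequality needed to invoke $\TB(1)$. The other small technicality, that the discrete shift of Proposition~\ref{pro:4-way} transfers cleanly to a measure-valued $w$ on the balancing line, is purely mechanical and follows the same template already used in the proof of Lemma~\ref{lem:balanced}. Once these two points are in hand, the lemma reduces to the one-line observation that on $[\beta,\delta]$ the two ratios controlling $\TB(1)$ both point in the same direction, so any weight sitting strictly to the left of $\delta$ can be profitably shifted to the right.
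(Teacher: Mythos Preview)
Your proposal is correct and follows exactly the approach the paper intends: the paper states just before the lemma that ``since $q(x)/x$ and $r(x)/x$ are increasing on the closed interval $[\beta,\delta]$, the following result is a straightforward application of Proposition~\ref{pro:4-way} in the continuous setting,'' and your write-up simply fills in those details. Your concern about strictness is easily resolved, since Proposition~\ref{pro:delta-gamma}(1) gives downward concavity with a unique local maximum at $\delta$, forcing the derivative of $r(x)/x$ to be strictly positive on $[\beta,\delta)$.
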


Next, we identify three balanced $\ple$'s, to be
denoted $M_\bst$, $M_\bal$ and $M_\sat$.  The subscripts
in this notation are abbreviations, respectively, for
best, balanced and saturated.
 
Recall, from Section ~\ref{sec:force}, that $x_\bst$ is the unique point in the interval
$[\beta,1]$ where the function $g(x)=[(d+1)r(x)-x]/[(d+1)r(x)-x^2]$
achieves its maximum value. Also, $x_\bal$ is the unique point
from $[\beta,1]$ where $r(x)=q(x)$.  Straightforward
calculations show $\beta<\delta<x_\bst<x_\bal<1$.
Set $M_{\bst}=M(x_\bst)$ and $M_\bal=M(x_\bal)$.

With $d$ fixed and $n\rightarrow\infty$, the block-regular
$\ple$ $M(\binom{n}{d},n)$ converges to the linear extension
which we now denote as $M(1)$.  We note that $r(1)=1/(d+1)$ and 
$q(1)=d/(d+1)$.  To be consistent with our notation for
$M_\bst$ and $M_\bal$, we set $x_\sat=1$ and $M_\sat=M(1)$.

\begin{lemma}\label{lem:no-far-out}
There is no $w\in\cgW_0$ which assigns positive weight to
a $\ple$ $M=M(x)$ with $x_\bal <x <1$.
\end{lemma}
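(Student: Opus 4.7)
The plan is to suppose for contradiction that $w\in\cgW_0$ assigns positive weight to $M(x_0)$ for some $x_0\in(x_\bal,1)$, and then construct a modified local weight function $w'\in\cgW_0$ whose support has strictly fewer $\ple$'s than that of $w$ (contradicting $\TB(4)$) or else one of strictly smaller cost (contradicting the minimality of $c_0$). Since $w$ is supported on balanced $\ple$'s (Lemma~\ref{lem:balanced}), we work on the one-parameter family $\{M(x):x\in[\beta,1]\}$, with constraints $R$ tight (Lemma~\ref{lem:r-tight}) and $A,B$ tight (Lemma~\ref{lem:A-and-B-tight}).

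I would split into two subcases depending on whether $Q$ is tight at $w$. In the subcase where $Q$ has strict slack, apply a first-order infinitesimal weight shift: transfer weight $\epsilon$ from $M(x_0)$ to $\lambda\epsilon$ of $M(x_0-\delta)$ for some small $\delta>0$, with $\lambda=r(x_0)/r(x_0-\delta)$ chosen to preserve $R$ exactly. Using the implicit-differentiation technique from the proof of Lemma~\ref{lem:ratio-curve}, the first-order change in $B$ equals $-\epsilon\delta(r(x_0)-r'(x_0)x_0)/r(x_0)$, which is strictly negative because $r(x)/x$ is strictly decreasing on $[\delta,1]$ (Proposition~\ref{pro:delta-gamma}) and $x_0>x_\bal>\delta$. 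The corresponding first-order change in $Q$ is of the same order $O(\delta)$ and is absorbed by the slack for $\delta$ sufficiently small, producing a feasible weight function of strictly smaller cost and contradicting the optimality of $c_0$.

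In the subcase where $Q$ is tight, one cannot shift to a smaller $x$, so I would instead build a convex-combination replacement using the extremal $\ple$'s $M(x_\bal)$ and $M(1)$, which are the supports of the two candidate optimal solutions identified in the proof outline. Solving the $2\times 2$ linear system $\alpha r(x_\bal)+\beta r(1)=r(x_0)$ and $\alpha q(x_\bal)+\beta q(1)=q(x_0)$ yields
\[
\alpha=\frac{2((d+1)r(x_0)-x_0^2)}{(d-1)x_\bal^2}, \qquad \beta=\frac{(d+1)(x_0^2-2r(x_0))}{d-1},
\]
both strictly positive on $(x_\bal,1)$ since $q(x_0)>r(x_0)$ and $(d+1)r(x_0)>x_0^2$ there (each following from the concavity of $r$ together with the defining condition $r(x_\bal)=x_\bal^2/2$). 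Replacing the mass $w(M(x_0))\cdot M(x_0)$ by $\alpha w(M(x_0))\cdot M(x_\bal)+\beta w(M(x_0))\cdot M(1)$ yields a new $w'$ of the same cost whose support omits $M(x_0)$, reducing the number of $\ple$'s with positive weight by one and contradicting $\TB(4)$.

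The main obstacle is verifying that the replacement in the second subcase does not increase $B$, i.e., that $\alpha x_\bal+\beta\le x_0$. This reduces to controlling the sign of an auxiliary function of $x_0$ that vanishes at both endpoints $x_0=x_\bal$ and $x_0=1$; showing it is non-positive in between requires a careful concavity analysis using the explicit form of $r(x)$ and its derivatives, together with the specific interplay between $x_\bal$, the value $r(1)=1/(d+1)$, and $q(1)=d/(d+1)$. This is the part of the argument where the separation between the cases $d\in\{2,3\}$ and $d\ge4$ becomes relevant, since the dominant optimal configuration shifts between $\{M(x_\bal)\}$ and $\{M(x_\bst),M(1)\}$.
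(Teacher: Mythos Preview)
Your Case~2 replacement is exactly the weight shift the paper uses, but the way you close the argument does not work. Replacing the mass on $M(x_0)$ by positive mass on $M(x_{\bal})$ and $M(1)$ need not reduce $t_4$: neither $M(x_{\bal})$ nor $M(1)$ is known to be in the support of $w$ at this point, so the support can grow by one. Thus the appeal to $\TB(4)$ is a genuine gap. What actually gives the contradiction is that $B$ \emph{strictly} decreases, i.e.\ $\alpha x_{\bal}+\beta < x_0$ strictly; since all weight is on balanced $\ple$'s, $A$ decreases by the same amount, and you contradict optimality of $c_0$ directly (no tie-breaker needed).

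The ``main obstacle'' you flag is in fact a one-line computation. With $v_1=\alpha w_0$, $v_2=\beta w_0$, and using $v_2=(d+1)(w_0 r_0 - v_1 r_1)$ (which follows from your first linear equation), the inequality $w_0 x_0 > v_1 x_{\bal} + v_2$ rearranges to
\[
\frac{(d+1)r(x_{\bal})-x_{\bal}}{(d+1)r(x_{\bal})-x_{\bal}^2}\;>\;\frac{(d+1)r(x_0)-x_0}{(d+1)r(x_0)-x_0^2},
\]
that is, $g(x_{\bal})>g(x_0)$. This holds because $g$ is strictly decreasing on $[x_{\bst},1]$ and $x_{\bst}<x_{\bal}<x_0$. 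There is no auxiliary function vanishing at two endpoints to analyze, no concavity argument beyond what is already packaged in the definition of $x_{\bst}$, and the split $d\in\{2,3\}$ versus $d\ge 4$ is entirely irrelevant to this lemma. Once you have the strict inequality, your Case~1 becomes unnecessary: the shift to $M(x_{\bal})$ and $M(1)$ preserves $R$ and $Q$ exactly, so it does not matter whether $Q$ was tight.
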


\begin{proof}
We argue by contradiction and assume some $w\in\cgW_0$ assigns
positive weight $w_0$ to a $\ple$ $M_0=M(x_0)$ with $x_\bal<x_0<1$.
Let $M_1=M_\bal$ and $M_2=M_\sat$.
We will form a local weight function $w'$ from $w$ by removing
all weight from $M_0$ and increasing the weight on $M_1$ and
$M_2$ so that $R(w')=R(w)$, $Q(w')=Q(x)$ and $B(w')<
B(w)$.  To accomplish this task, we must find
positive values $v_1=w'_1-w_1$ and $v_2=w'_2-w_2$ that satisfy the 
following three requirements:
\begin{align*}
w_0 r_0&\le  v_1 r_1+ v_2/(d+1)\\ 
w_0 q_0&\le  v_1 q_1+ v_2 d/(d+1)\\ 
w_0 x_0 &>   v_1 x_1 + v_2.
\end{align*}

We consider the first two requirements as equations.  Noting
that $q_1=r_1$, this results in the following solution:
\begin{align*}
v_1&=w_0(dr_0-q_0)/[(d-1)r_1]\\
v_2&=(d+1)w_0(q_0-r_0)/(d-1).
\end{align*}
From Proposition~\ref{pro:max-q/r}, we know $q_0/r_0<d$ which
shows that $v_1>0$.  Also, since $x_\bal<x_0<1$, we know $q_0>r_0$
which implies $v_2>0$.

Using the fact that $v_2$ can be also be written as
$(d+1)(w_0r_0-v_1r_1)$, the third inequality is equivalent to:
\[
\frac{(d+1)r_1-x_1}{(d+1)r_1-x_1^2} > \frac{(d+1)r_0-x_0}{(d+1)r_0-x_0^2}.
\]
However, we know the function $g(x)$ is decreasing when $x\ge x_\bst$,
so this last inequality holds.  The contradiction completes the proof
of the lemma.
\end{proof}

\begin{lemma}\label{lem:no-total-preserving}
Let $w\in\cgW_0$.  Then there
is no maximal-preserving linear extension
which is assigned positive weight by $w$.
\end{lemma}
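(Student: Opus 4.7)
The plan is to argue by contradiction. Suppose some $w\in\cgW_0$ places positive weight $w_L>0$ on a maximal-preserving linear extension; by Lemma~\ref{lem:completely-regular}(1), this must be $L=[\Min(P)<\Max(P)]$, which in the continuous parameters has $x=y=1$, $r=0$, and $q=1$. Since constraint~$R$ is tight by Lemma~\ref{lem:r-tight} and $L$ contributes nothing to $R$, the support of $w$ must contain at least one other $\ple$; by Lemmas~\ref{lem:decisive}, \ref{lem:block-regular}, \ref{lem:balanced}, and~\ref{lem:below-delta}, any such supported $\ple$ is balanced of the form $M(x)$ with $x\in[\delta,1]$.

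For the main case, pick some $M(x^*)$ in the support with $x^*\in[\delta,1)$. I will construct an explicit three-way weight shift that preserves $R$ and $Q$ while strictly decreasing $A=B$. For small $\epsilon>0$, set
\[
\beta_0=\frac{\epsilon}{d\,r(x^*)-q(x^*)},\qquad \alpha=(d+1)\,r(x^*)\,\beta_0,
\]
and form $w'$ by $w'(L)=w_L-\epsilon$, $w'(M(x^*))=w(M(x^*))-\beta_0$, and $w'(M_\sat)=w(M_\sat)+\alpha$. Proposition~\ref{pro:max-q/r} gives $d\,r(x^*)>q(x^*)$ whenever $x^*<1$, so $\alpha,\beta_0>0$; and $\epsilon$ may be chosen small enough that all three revised weights remain non-negative. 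The choices of $\alpha$ and $\beta_0$ are engineered precisely to force $\Delta R=\Delta Q=0$; then, using the balanced identity $r(x^*)+q(x^*)=x^{*2}$, a short computation collapses the change in the other two constraints to
\[
\Delta A=\Delta B=-\frac{\epsilon\,x^*(1-x^*)}{d\,r(x^*)-q(x^*)}<0,
\]
contradicting optimality of $c_0$.

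The degenerate case to rule out is when the support is contained in $\{L,M_\sat\}$. Here constraint~$R$ forces $w(M_\sat)\ge d+1$, after which $Q=w_L+w(M_\sat)\,d/(d+1)\ge d\ge1$ is satisfied even if one sets $w_L=0$. Thus the weight function obtained by dropping $L$ entirely is still feasible, with strictly smaller cost $w(M_\sat)<w_L+w(M_\sat)$, again contradicting optimality. The main obstacle is identifying the correct three-point shift in the main case: $L$ converts one unit of $B$-cost into one unit of $Q$ and zero $R$, so its weight must be routed through $M_\sat$ (which provides a balanced mix of $R$ and $Q$) while the resulting $R$-surplus is reabsorbed by trimming $M(x^*)$, with the exact coefficients dictated by demanding exact preservation of $R$ and $Q$; once the shape of the shift is guessed, all the algebra collapses through the balanced identity.
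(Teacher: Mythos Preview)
Your proof is correct and follows essentially the same approach as the paper: both remove weight from $L$ and from some balanced $M_1=M(x^*)$ with $x^*<1$, add weight to $M_\sat$, and choose the coefficients so that $R$ and $Q$ are preserved while $B$ strictly decreases (your $\epsilon,\beta_0,\alpha$ are exactly the paper's $v_0,v_1,v_2$ with the same relations $v_0/v_1=dr_1-q_1$ and $v_2=(d+1)v_1r_1$). The one place you are actually more careful than the paper is your explicit treatment of the degenerate case $\operatorname{supp}(w)\subseteq\{L,M_\sat\}$; the paper simply asserts ``since constraint~$R$ is tight, there must be some $M_1$ with $r_1>q_1$,'' which tacitly relies on the same easy observation you spell out.
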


\begin{proof}  
Suppose to the contrary that some $w\in\cgW_0$ assigns positive
weight $w_0$ to the maximal-preserving linear extension 
$L_0=[\Min(P)<\Max(P)]$.  Since constraint~$R$ is tight, there
must be some $\ple$ $M_1$ with $w_1>0$ and $r_1>q_1$.  Let
$M_2=M_\sat$. We will then obtain $w'$ from $w$ by setting $w'_0
=w_0-v_0$, $w'_1=w_1-v_1$ and $w'_2=w_2+v_2$. 
We require $0<v_0\le w_0$, $0<v_1\le w_1$ and
$v_0/v_1= dr_1-q_1$.  Clearly, appropriate values of $v_0$ and 
$v_1$ can be chosen.  We then set $v_2=(d+1)v_1r_1$.

With these choices, it follows that $R(w')=R(w)$ and
$Q(w')=Q(w)$. 
 However,
\begin{align*}
B(w')&=B(w)-v_0-v_1x_1+v_2\\
     &=B(w')-v_1(dr_1-q_1)-v_1x_1+(d+1)v_1r_1\\
     &=B(w')-v_1[dr_1-(x_1^2-r_1)-x_1+(d+1)r_1]\\
     &=B(w)-v_1(x_1-x_1^2)\\
     &<B(w).
\end{align*}
Recall that all $\ple$'s with positive weight are balanced, by Lemma~\ref{lem:balanced}, so we also have $A(w')<A(w)$, a contradiction which  completes the proof of the lemma.
\end{proof}

\begin{lemma}\label{lem:x_bst}
Let $w\in\cgW_0$.  
If $w$ assigns positive weight to a $\ple$ $M=M(x)$ with
$\delta\le x<x_\bal$, then $x=x_\bst$.
\end{lemma}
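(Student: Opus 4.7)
The plan is to argue by contradiction. Suppose some $w \in \cgW_0$ assigns positive weight $w_0$ to $M_0 = M(x_0)$ for an $x_0 \in [\delta, x_\bal) \setminus \{x_\bst\}$. Set $M_1 = M(x_\bst)$ and $M_2 = M_\sat$, and consider a three-way weight shift: remove $v_0 > 0$ from $M_0$, add $v_1$ to $M_1$, and adjust the weight on $M_2$ by $v_2$. Demanding that $\Delta R = 0 = \Delta Q$ (so the tight constraints $R$ and $Q$ persist) gives a $2{\times}2$ linear system whose solution is
\[
v_1 = v_0 \cdot \frac{d r_0 - q_0}{d r_1 - q_1}, \qquad v_2 = (d+1)\, v_0 \cdot \frac{r_1 q_0 - r_0 q_1}{d r_1 - q_1},
\]
writing $r_i = r(x_i)$ and $q_i = q(x_i)$. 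Proposition~\ref{pro:max-q/r} gives $q/r < d$ on the relevant range, so both $d r_0 - q_0$ and $d r_1 - q_1$ are strictly positive, forcing $v_1 > 0$.

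The decisive computation is the change $\Delta B$. Using the identity $(d+1) r(x) - x^2 = d r(x) - q(x)$, a short algebraic manipulation collapses $\Delta B$ to the closed form
\[
\Delta B = v_0 \,(d r_0 - q_0)\, \bigl[g(x_0) - g(x_\bst)\bigr].
\]
Since $g$ attains its unique maximum on $[\beta,1]$ at $x_\bst$ and $x_0 \neq x_\bst$, the bracket is strictly negative, so $\Delta B < 0$. Every $\ple$ with positive weight is balanced (Lemma~\ref{lem:balanced}), hence $\Delta A = \Delta B < 0$ as well. Thus if the shift is feasible, the resulting weight function strictly beats $c_0$, contradicting the optimality of $w$.

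Feasibility hinges on the sign of $v_2$. If $v_2 \ge 0$, taking $v_0 = \min(w_0, \varepsilon)$ for $\varepsilon > 0$ sufficiently small keeps every adjusted weight non-negative. If $v_2 < 0$ but $w(M_\sat) > 0$, one reverses the role of $M_\sat$ (removing $|v_2|$ from it rather than adding); again a sufficiently small $v_0$ makes the shift feasible. I expect the main obstacle to be the remaining case in which $v_2 < 0$ \emph{and} $w(M_\sat) = 0$.

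To handle this residual case I argue that it does not actually arise. Lemmas~\ref{lem:below-delta}, \ref{lem:no-far-out}, and~\ref{lem:no-total-preserving}, together with $w(M_\sat) = 0$, confine all positive weight of $w$ to $\{M(x) : x \in [\delta, x_\bal]\}$. Since constraint $R$ is tight (Lemma~\ref{lem:r-tight}) and constraint $Q$ requires $Q(w) \ge 1 = R(w)$, we obtain
\[
\sum_i w_i\,(q_i - r_i) \;=\; Q(w) - R(w) \;\ge\; 0.
\]
But on $[\delta, x_\bal]$ we have $q(x) \le r(x)$ with equality only at $x = x_\bal$, so this inequality forces $w_i = 0$ for every $x_i \neq x_\bal$. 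This contradicts $w_0 > 0$ at $x_0 \neq x_\bal$ and completes the proof.
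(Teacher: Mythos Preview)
Your proof is correct and follows essentially the same three-way weight shift as the paper: remove weight from $M_0=M(x_0)$, add to $M_1=M_\bst$, and adjust $M_2=M_\sat$ so that $R$ and $Q$ are preserved, then observe that the induced inequality $B(w')<B(w)$ reduces to $g(x_\bst)>g(x_0)$. The only organizational difference is that the paper dispatches the feasibility issue at the outset by asserting ``since $r_0>q_0$, $w$ must also assign positive weight to $M_\sat$,'' whereas you defer this to a residual case and actually spell out the reason (all remaining weight lies on $[\delta,x_\bal]$ where $q\le r$, forcing $Q(w)\le R(w)=1$ and hence concentration at $x_\bal$); your justification is in fact more explicit than the paper's one-line claim, and your closed form $\Delta B=v_0(dr_0-q_0)\bigl[g(x_0)-g(x_\bst)\bigr]$ is a nice touch the paper leaves implicit.
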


\begin{proof}
Suppose to the contrary that $w$ assigns positive weight
to a $\ple$ $M_0=M(x)$ with $\delta\le x<x_\bal$ and $x\neq
x_\bst$.  Since $r_0>q_0$, $w$ must also assign positive
weight to $M_2=M_\sat$.  We form $w'$ by making three changes.
First, we set $w'_0=w_0-v_0$, and we will
require that $0< v_0\le w_0$.   Also, there may be an
additional restriction on the size of $v_0$, a detail
that will soon be clear.  Second, we will take $M_1=M_\bst$ and
set $w'_1=w_1+v_1$ with $v_1>0$.  Third, we will take
$M_2=M_\sat$ and set $w'_2=w_2+v_2$, however we place 
no restrictions on the sign of $v_2$, except that we need
$w'_2\ge0$, so if $v_2$ is negative, we need
$|v_2|\le w_2$.  This will be handled if necessary by
restricting the size of $v_0$.

Our choices will be made so that $R(w')=R(w)$, $Q(w')=Q(w)$ and
$B(w')<B(w)$.  This requires:
\begin{align}\label{eqn:required}
v_0 r_0 &=  v_1 r_1 + v_2/(d+1),\\ 
v_0 q_0 &=  v_1 q_1 + v_2 d/(d+1),\\
v_0 x_0 &>  v_1 x_1 + v_2.
\end{align}

Solving the first two equations, we obtain
$v_1=(dr_0-q_0)/(dr_1-q_1)$ so that $v_1>0$.  Note that
this equation can be rewritten as $v_1=[(d+1)r_0-x_0^2]/[(d+1)r_1-x_1^2]$.
We also obtain:
\begin{align*}
v_2&=(d+1)(v_0r_0-v_1r_1).\\
\end{align*}
Accordingly, when $v_2<0$, we can maintain $|v_2|\le w_2$ by scaling the system~\ref{eqn:required} to keep $v_0$ sufficiently small.

After substituting the specified values, the third constraint $v_0x_0>v_1x_1+v_2$
becomes $g(x_1)>g(x_0)$, which is true since $x_0\neq x_1$. Since $w'$ can be obtained with the desired properties, we have contradicted $w\in \cgW_0$, completing the proof of the lemma.
\end{proof}

With these observations in hand, our linear programming problem
reduces to:

\smallskip
Minimize the quantity $c$ subject to the following constraints:
\begin{align*}
  B(w)&=w(M_\bst)x_\bst+w(M_\bal)x_\bal+w(M_\sat)\le c\\
  R(w)&=w(M_\bst)r(x_\bst)+w(M_\bal)r(x_\bal)+w(M_\sat)/(d+1)\ge1\\
  Q(w)&=w(M_\bst)q(x_\bst)+w(M_\bal)q(x_\bal)+w(M_\sat)d/(d+1)\ge1
\end{align*}

By increasing $w(M_\bal)$, we can decrease $w(M_\bst)$ and $w(M_\sat)$ or vice versa to see that there are optimal solutions with at most two non-zero values
among the variables $w(M_\bst)$, $w(M_\bal)$ and $w(M_\sat)$.
Furthermore, the form of the constraints tell us that the only 
possible solutions are (1)~to put weight only on $M_\bal$, or 
(2)~to put positive weight only on $M_\bst$ and $M_\sat$.
The values of $c_0$ given in the statement of our main theorem
reflect the calculations of $c_0$ which these two options
would yield.  With these remarks, the
proof of our lower bound is complete.

\section{Proof of the Upper Bound}\label{sec:upper-bound}

In this section, we show that the lower bound obtained in the
preceding section is also an upper bound.  Our lower bound
is the minimum of two quantities, so in this section we show
that each of the two options provides an upper bound.  As before, our
treatment is informal.

Recall that $M(x)$ is a block-regular ple with $x=b/\binom{n}{d} = a/n$. In particular, $M_{\bal} = M(x_{\bal})$, $M_{\bst} = M(x_{\bst})$, and $M_{\sat} = M(1)$ where $x_{\bal}$ and $x_{\bst}$ were defined in Section~\ref{sec:force}.

We start with the first option in which all weight is put on
the $\ple$ $M_0=M_\bal$ so that $x_0=x_\bal$.
We have $r_0=r(x_0)=x_0^2/2$. Therefore
$w_0=w(M_0)=1/r_0=2/{x_0^2}$.  Also, we have
$c_0=w_0x_0=x_0/r_0=2/x_0$. 
To construct a fractional local realizer for $P$,
we then distribute the weight $w_0$ evenly among all $\ple$'s
which are isomorphic to $M_0$.  With this rule, we will have:

\begin{enumerate}
\item For a given pair $(S,S')$ of distinct sets in $\Max(P)$,
the total weight assigned to $\ple$'s $M$ with $S>S'$ in
$M$ is $w_0x_0^2/2=1$.
\item For a given pair $(u,u')$ of distinct sets in $\Min(P)$,
the total weight assigned to $\ple$'s $M$ with $u>u'$ in
$M$ is $w_0x_0^2/2=1$.
\item For a pair $(u,S)\in\Min(P)\times\Max(P)$ with
$u\not\in S$, the total weight assigned to $\ple$'s $M$ with
$u>S$ in $M$ is~$w_0r_0=1$.
\item For a pair $(u,S)\in\Min(P)\times\Max(P)$ with
$u\not\in S$, the total weight assigned to $\ple$'s $M$ with
$u<S$ in $M$ is~$w_0(x_0^2-r_0)=w_0r_0=1$.
\end{enumerate}

We are left to consider pairs $(u,S)\in\Min(P)\times\Max(P)$ with
$u\in S$.  For the $\ple$ $M_0$, the fraction of $\Min(P)$ that
is in the top box is $1-x_0^{1/d}$, so the total weight assigned
to $\ple$'s $M$ with $u<S$ in $M$ is $w_0 x_0[x_0-(1-x_0^{1/d})]$.
We need this quantity to be at least~$1$, but this is equivalent
to requiring that $r_0=x_0^2/2\ge x_0(1-x_0^{1/d})$, and a
straightforward computation shows that this inequality is valid.

For the second option, we take $M_1=M_\bst$, $x_1=x_\bst$,
$M_2=M_\sat$ and $x_2=1$.  Now we have:
\begin{align}\label{eqn:two-parts}
\begin{split}
w_1r_1+w_2/(d+1)&=1,\\
w_1q_1+w_2d/(d+1)&=1,\\
w_1x_1+w_2&=c_0.
\end{split}
\end{align}

We take all $\ple$'s of $P$ which are isomorphic to
$M_1$ and distribute the weight $w_1$ evenly among them.
Similarly, we take all $\ple$'s that are isomorphic
to $M_2$ and distribute the weight  $w_2$ evenly among them.
We then have:

\begin{enumerate}
\item For a given pair $(S,S')$ of distinct sets in $\Max(P)$,
the total weight assigned to $\ple$'s $M$ with $S>S'$ in
$M$ is $w_1x_1^2/2+w_2/2$.  However, if we add the first
two of the equations in~\eqref{eqn:two-parts} and divide
by~$2$, we obtain:
\[
w_1(r_1+q_1)/2+w_2/2= w_1x_1^2/2+w_2/2=1.
\]
\item For a given pair $(u,u)$ of distinct sets in $\Min(P)$,
the total weight assigned to $\ple$'s $M$ with $u>u'$ in
$M$ is $w_1x_1^2/2+w_2/2$.  Again, this sum is~$1$.
\item For a pair $(u,S)\in\Min(P)\times\Max(P)$ with
$u\not\in S$, the total weight assigned to $\ple$'s $M$ with
$u>S$ in $M$ is~$w_1r_1+w_2/(d+1)=1$.
\item For a pair $(u,S)\in\Min(P)\times\Max(P)$ with
$u\not\in S$, the total weight assigned to $\ple$'s $M$ with
$u<S$ in $M$ is~$w_1(x_1^2-r_1)+w_2d/(d+1)$.  This reduces
to the second equation of \eqref{eqn:two-parts}.
\end{enumerate}

Again, we are left to consider pairs $(u,S)\in\Min(P)\times\Max(P)$ with
$u\in S$.  The desired inequality is:
\[
w_1x_1[x_1-(1-x_1^{1/d})]+w_2\ge 1.
\]

To obtain this inequality, we first observe that the inequality
$d+1-dw_1r_1\ge1$ is equivalent to $1\ge w_1r_1$, but this inequality
holds strictly in view of the first equation of~\eqref{eqn:two-parts}. The first two equations in \eqref{eqn:two-parts} imply $w_2=2-w_1x_1^2$.
It follows that:
\begin{align*}
w_1x_1[x_1-(1-x_1^{1/d})]+w_2&= w_1x_1[x_1-(1-x_1^{1/d})]+2-w_1x_1^2\\
                             &= -w_1x_1(1-x_1^{1/d}) + 2\\
                             &\ge -w_1r_1+2\\
                             &\ge 1.
\end{align*}
With these observations, the proof of the upper bound is complete.

\section{Posets of Bounded Degree}\label{sec:maximum-degree}

Let $d$ be a non-negative integer.
One of the long-standing problems in dimension theory is to
determine as accurately as possible the quantity $\MD(d)$ which we
define to be the maximum dimension among all posets in which each
element is comparable with at most~$d$ other elements.  Trivially,
$\MD(0)=\MD(1)=2$.  With a little effort, one can show that
$\MD(2)=3$.  On the other hand, it is not at all clear that
$\MD(d)$ is well-defined when $d\ge3$.  

However, in 1983,
 R\"{o}dl and Trotter\cite{bib:Trot-Book} proved that $\MD(d)$ is well-defined
and satisfies $\MD(d)\le 2d^2+2$.  Subsequently, F\"{u}redi
and Kahn~\cite{bib:FurKah} proved in 1986 that $\MD(d)=O(d \log^2 d)$,
and it is historically significant that their proof was
an early application of the Lov\'asz local lemma. 

From below, the standard examples show $\MD(d)\ge d+1$.  However,
in 1991, Erd\H{o}s, Kierstead and Trotter~\cite{bib:ErKiTr} improved 
this to $\MD(d)=\Omega(d\log d)$. From 1991 to 2018, there were
no improvements in either bound, but after a gap of nearly 30 years,
Scott and Wood~\cite{bib:ScoWoo} have recently made the
following substantive improvement in the upper bound.

\begin{theorem}\label{thm:SW-technical}
If $d\ge 10^4$ and $d\rightarrow\infty$, then
\[
\MD(d)\le (2e^3+o(1)) (d\log d)\bigl(e^{2\sqrt{\log \log d}}\bigr)\log\log d.
\]
\end{theorem}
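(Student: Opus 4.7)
The plan is to prove this bound by adapting the probabilistic framework of F\"uredi--Kahn with a refined iterative scheme. Given a poset $P$ whose comparability graph has maximum degree $d$, I want to exhibit a realizer of the claimed size. By the standard reduction of replacing $P$ with its split, I may assume $P$ is a bipartite poset of height $2$ with minimal elements $A$, maximal elements $B$, and comparability-graph degree $O(d)$. The task then becomes: cover every critical pair $(a,b)\in A\times B$ with $a\not\le b$ by a linear extension in which $a>b$.

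My first step is to analyze a single randomized construction. I would choose a random partition of $A\cup B$ into blocks of size $s$ (for a parameter $s$ to be optimized) and a random ordering of the blocks; within each block, the maximal elements are placed first respecting $P$, followed by the minimal elements respecting $P$, with any remaining ties broken randomly. For a fixed critical pair $(a,b)$, the probability that the two elements land in the same block with $a>b$ in the resulting order is $\Theta(s/n)$. The degree hypothesis bounds, by $O(ds)$, the number of critical pairs whose reversal event is correlated with that of $(a,b)$. Applying the asymmetric Lov\'asz local lemma to the events ``pair $(a,b)$ is never reversed among $N$ independent extensions,'' one shows that $N=O((n/s)\log d)$ extensions realize $P$ with positive probability, which after optimizing $s$ reproduces the F\"uredi--Kahn bound $\MD(d)=O(d\log^2 d)$.

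The Scott--Wood saving of $e^{2\sqrt{\log\log d}}$ over a full logarithmic factor comes from iterating this construction. After one round of $N_1$ random extensions, the critical pairs not yet reversed form a sparser sub-instance in which the effective local degree has dropped geometrically; a second round with a carefully enlarged block size $s_2$ then reverses these residual pairs using strictly fewer extensions $N_2$, and so on through $k$ rounds. The plan is to show that each round pays a multiplicative cost of only $e^{O(1)}$ rather than $\log d$, and that balancing the depletion rate against the cumulative cost forces the optimal iteration depth $k=\Theta(\sqrt{\log\log d})$, producing exactly the factors $e^{2\sqrt{\log\log d}}\log\log d$ in the final count.

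The main obstacle will be the bookkeeping across iterations. At each stage one must verify that the residual sub-instance inherits the bounded-degree structure in a precise enough quantitative form for the LLL to apply with the refined parameters, and one must control the correlations introduced by conditioning on the outcomes of earlier rounds. Extracting the exact leading constant $2e^3$ requires tracking how the per-iteration constants combine in the limit as $d\to\infty$, and establishing that the $o(1)$ error term genuinely vanishes uniformly. This quantitative tightening, rather than the overall probabilistic scheme, is where the heart of the argument lies.
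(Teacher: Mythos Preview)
The paper does not prove this theorem. Theorem~\ref{thm:SW-technical} is stated in Section~\ref{sec:maximum-degree} purely as a citation of the result of Scott and Wood~\cite{bib:ScoWoo}; it is included only for context in the discussion of $\MD(d)$, and no argument for it is given or sketched anywhere in the paper. So there is no ``paper's own proof'' against which your proposal can be compared.

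Separately, what you have written is not a proof but a plan: you outline the F\"uredi--Kahn random-partition scheme, assert that an iterated version with depth $\Theta(\sqrt{\log\log d})$ yields the stated bound, and then explicitly flag the bookkeeping across iterations and the extraction of the constant $2e^3$ as unresolved obstacles. That outline is broadly faithful to the actual Scott--Wood argument, but the substance of their paper lies precisely in the parts you label as ``the heart of the argument'' and leave undone: setting up the right per-round parameters, controlling the degree of the residual hypergraph of uncovered critical pairs after conditioning, and verifying that the LLL applies at each stage with the claimed savings. Until those steps are carried out, the proposal does not constitute a proof of the statement.
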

As a consequence, we know $\MD(d)\le d\log^{1+o(1)} d$.

Of course, the analogous problem can be considered for any
of the variants of dimension, and in some cases, the following
phenomenon is then observed:  There is a bound on the parameter
for the class of height~$2$ posets in which each maximal element
is comparable with at most $d$ minimal elements \emph{independent}
of how many maximal elements are comparable with a single
minimal element.  As our results show, fractional
local dimension is one such parameter. Fractional dimension and
Boolean dimension are two others.  Curiously, local dimension is not, 
since $\ldim(1,2;n)$ goes to infinity with $n$.  

For general posets, one can ask for the maximum value $\MFLD(d)$ of
the fractional local dimension of a poset in which each element
is comparable with at most $d$ others.  Taking the split of such
a poset, the fractional local dimension cannot increase more 
than~$1$, so we have $\FLD(d)+2$ as an upper bound.  However, we
have no feeling as to whether this is (essentially) best possible.

\section{Closing Comments}\label{sec:close}

One natural open problem for this new parameter is to 
determine the maximum fractional local dimension among all posets on
$n$ points.   The answer to the analogous question for fractional
dimension is $\lfloor n/2\rfloor$ when $n\ge4$, and the standard examples
show this inequality is tight. On the other hand, it is shown 
in~\cite{bib:KMMSSUW} that the maximum local dimension of a poset 
on $n$ points is $\Theta(n/\log n)$, and we suspect that this is
also the answer for fractional local dimension.

Other open questions revolve around the interplay between
fractional local dimension and width.  For any poset with width $w\geq 3$, the dimension is at most $w$~\cite{bib:Dilworth}. This inequality is tight for fractional dimension precisely when $P$ contains the standard example $S_w$ ~\cite{bib:FelTro}. However, we don't know if this inequality is tight for local dimension as standard examples have local dimension 3~\cite{bib:BPSTT}. This leaves open the analogous question: What is the maximum value of the fractional local dimension of a poset with width $w$?


\begin{thebibliography}{99}
  
\bibitem{bib:BPSTT} 
F.\ Barrera-Cruz, T.\ Prag, H.\ C.\ Smith, L.\ Taylor and W.\ T.\ Trotter, 
Comparing Dushnik-Miller dimension, Boolean dimension and local dimension, 
\textit{Order} \textbf{37} (2020), 243--269.

\bibitem{bib:BiHaPo} 
C.\ Bir\'{o}, P.\ Hamburger and A.\ P\'{o}r, 
The proof of the removable pair conjecture for fractional dimension, 
\textit{Electron. J. Comb.} \textbf{21} (2014), \#P1.63.

\bibitem{bib:BHPT} 
C.\ Bir\'{o}, P.\ Hamburger, A.\ P\'{o}r and W.\ T.\ Trotter,
Forcing posets with large dimension to contain large standard examples,
\textit{Graphs and Combin.}, \textbf{32} (2016), 861--880.

\bibitem{bib:BoGrTr} 
B.\ Bosek, J.\ Grytczuk and W.\ T.\ Trotter, 
Local dimension is unbounded for planar posets, submitted.  
Available on the arXiv at 1712.06099.

\bibitem{bib:BriSch} 
G.\ R.\ Brightwell and E.\ R.\ Scheinerman, 
On the fractional dimension of partial orders, 
\textit{Order} \textbf{9} (1992), 139--158.

\bibitem{bib:Dilworth}
R. P. Dilworth, 
A decomposition theorem for partially ordered sets, 
\textit{Ann. Math. (2),}~\textbf{41} (1950), 161-166.

\bibitem{bib:Dush} 
B.\ Dushnik, 
Concerning a certain set of arrangements,
\textit{Proc. Amer. Math. Soc.}~\textbf{1} (1950), 788--796.

\bibitem{bib:DusMil} 
B.\ Dushnik and E.\ W.\ Miller, 
Partially ordered sets,
\textit{Amer. J. Math.}~\textbf{63} (1941), 600--610.

\bibitem{bib:ErKiTr}
P.\ Erd\"{o}s, H.\ Kierstead and W.\ T.\ Trotter,
The dimension of random ordered sets,
\textit{Random Structures and Algorithms} \textbf{2} (1991), 253--275.

  
\bibitem{bib:FeMeMi} 
S.\ Felsner, T.\ M\'{e}sz\'{a}ros and P.\ Micek, 
Boolean dimension and tree-width,
\textit{Combinatorica} (2020), 139--158. DOI: s00493-020-4000-9

\bibitem{bib:FelTro} 
S.\ Felsner and W.\ T.\ Trotter, 
On the fractional dimension of partially ordered sets, 
\textit{Discrete Math.} \textbf{136} (1994), 101--117.

\bibitem{bib:FurKah}
Z.\ F\"{u}redi and J.\ Kahn,
On the dimension of ordered sets of bounded degree,
\emph{Order} \textbf{3} (1988), 15--20.

\bibitem{bib:GaNeTa} 
G.\ Gambosi, J.\ Ne\v{s}et\v{r}il and M.\ Talamo, 
On locally presented posets, 
\textit{Theoretical Computer Science} \textbf{70} (1990), 251--260.

\bibitem{bib:Hira} 
T.\ Hiraguchi, 
On the dimension of orders, 
\textit{Sci. Rep.  Kanazawa Univ.},~\textbf{4} (1955), 1--20.
  
\bibitem{bib:HosMor} 
S. Ho\c{s}ten and W. D. Morris, 
The dimension of the complete graph, 
\textit{Discrete Math.} \textbf{201} (1998), 133--139.

\bibitem{bib:JMMTWW}
G.\ Joret, P.\ Micek, K.\ Milans, W.\ T.\ Trotter, B.\ Walczak and R.\ Wang,
Tree-width and dimension,
\textit{Combinatorica} \textbf{36} (2016), 431--450.

\bibitem{bib:Kier} 
H.\ A.\ Kierstead, 
The order dimension of the $1$-sets versus the $k$-sets, 
\textit{J. Comb. Theory Ser. A}~\textbf{73} (1996), (219--228.

\bibitem{bib:KMMSSUW} 
Kim, Martin, Masa\v{r}\'{i}k, Shull, Smith, Uzzell and Wang,
On difference graphs and the local dimension of posets, 
\textit{European J. Combin.} \textbf{86} (2020), 103074.
  
\bibitem{bib:KleMar} 
D. J. Kleitman and G. Markovsky, 
On Dedekind's problem: The number of isotone boolean functions, II, 
\textit{Trans. Amer. Math.  Soc.}~\textbf{213} (1975), 373--390.

\bibitem{bib:MeMiTr} 
T.\ M\'{e}sz\'{a}ros, P.\ Micek and W.\ T. Trotter, 
Boolean Dimension, Components and Blocks, 
\textit{Order} \textbf{37} (2020), 287-298.

\bibitem{bib:NesPud}
J.\ Ne\v{s}et\v{r}il and P.\ Pudl\'ak, \emph{A note on {B}oolean
dimension of posets}, Irregularities of partitions ({F}ert\H od, 1986),
Algorithms Combin. Study Res. Texts, vol.~8, Springer, Berlin, 1989,
pp.~137--140.

\bibitem{bib:ScoWoo}
A.\ Scott and D.\ Wood,
Better bounds for poset dimension and boxicity,
\textit{Trans. Amer. Math. Soc.} \textbf{373} (2020), 2157-2172.

\bibitem{bib:Spen} 
J.\ Spencer, 
Minimal scrambling sets of simple orders,
\textit{Acta. Math. Hungar.} \textbf{22} (1972), 349--353.

\bibitem{bib:Trot-Book} 
W.\ T.\ Trotter, 
\textit{Combinatorics and Partially Ordered Sets: Dimension Theory}, 
The Johns Hopkins University Press, Baltimore, MD, 1992.
  
\bibitem{bib:TroWal} 
W.\ T.\ Trotter and B.\ Walczak, 
Boolean dimension and local dimension, 
\textit{Electron. Notes in Discrete Math.} \textbf{61} (2017), 1047-1053.
  
\bibitem{bib:TroWan}
W.\ T.\ Trotter and R.\ Wang,
Dimension and matchings in comparability and incomparability graphs,
\textit{Order}~\textbf{33} (2016), 101--119 (with R. Wang).

\bibitem{bib:TroWin} 
W.\ T.\ Trotter and P.\ M.\ Winkler, 
Ramsey theory and sequences of random variables, 
\textit{Probability, Combinatorics and Computing} \textbf{7} (1998), 221--238.

\bibitem{bib:Ueck} 
T. Ueckerdt, 
personal communication.  
\end{thebibliography}
\end{document}